\pgfplotsset{compat=1.18}
\newcommand{\argdot}{\,\cdot\,}
\newcommand{\E}{\mathbb{E}}
\newcommand{\F}{\mathbb{F}}
\newcommand{\N}{\mathbb{N}}
\renewcommand{\P}{\mathbb{P}}
\newcommand{\Q}{\mathbb{Q}}
\newcommand{\R}{\mathbb{R}}
\renewcommand{\S}{\mathbb{S}}
\newcommand{\T}{\mathbb{T}}
\newcommand{\V}{\mathbb{V}}
\newcommand{\cA}{\mathcal{A}}
\newcommand{\cB}{\mathcal{B}}
\newcommand{\cC}{\mathcal{C}}
\newcommand{\cE}{\mathcal{E}}
\newcommand{\cF}{\mathcal{F}}
\newcommand{\cJ}{\mathcal{J}}
\newcommand{\cL}{\mathcal{L}}
\newcommand{\cN}{\mathcal{N}}
\newcommand{\cS}{\mathcal{S}}
\newcommand{\cU}{\mathcal{U}}
\newcommand{\cV}{\mathcal{V}}
\newcommand{\cY}{\mathcal{Y}}
\newcommand{\mfF}{\mathfrak{F}}
\newcommand{\mfM}{\mathfrak{M}}
\newcommand{\mfS}{\mathfrak{S}}
\newcommand{\rd}{\mathrm{d}}
\newcommand{\rD}{\mathrm{D}}
\newcommand{\rF}{\mathrm{F}}
\newcommand{\intS}{\mathring{\S}}
\theoremstyle{plain}
\newtheorem{theorem}{Theorem}[section]
\newtheorem{proposition}[theorem]{Proposition}
\newtheorem{lemma}[theorem]{Lemma}
\newtheorem{definition}[theorem]{Definition}
\newtheorem{assumption}{Assumption}
\theoremstyle{definition}
\newtheorem{remark}[theorem]{Remark}
\DeclareMathOperator{\trace}{tr}
\DeclareMathOperator{\diag}{diag}
\title{Optimal adaptive control with separable drift uncertainty}
\author{
Samuel N.\ Cohen%
\thanks{University of Oxford, Mathematical Institute, Oxford, United Kingdom,
		OX2 6GG (\url{cohens@maths.ox.ac.uk}).}
\and
Christoph Knochenhauer%
\thanks{Technische Universit\"{a}t München, School of Computation,
        Information and Technology,
		Parkring 11--13, 85748 Garching bei M\"{u}nchen, Germany
		(\url{knochenhauer@tum.de}).}
\and
Alexander Merkel%
\thanks{Technische Universit\"{a}t Berlin, Institut f\"{u}r Mathematik,
		Stra\ss{}e des 17. Juni 136, 10623 Berlin, Germany
		(\url{merkel@math.tu-berlin.de}).}
}
\begin{document}

\maketitle

\begin{abstract}
We consider a problem of stochastic optimal control with separable drift
uncertainty in strong formulation on a finite time horizon. The drift of
the state $Y^{u}$ is multiplicatively influenced by an unknown random variable
$\lambda$, while admissible controls $u$ are required to be adapted to the
observation filtration. Choosing a control actively influences the state and
information acquisition simultaneously and comes with a learning effect. The
problem, initially non-Markovian, is embedded into a higher-dimensional
Markovian, full information control problem with control-dependent filtration
and noise. To that problem, we apply the stochastic Perron method to
characterize the value function as the unique viscosity solution of the HJB
equation, explicitly construct $\varepsilon$-optimal controls, and show that the
values in the strong and weak formulation agree. Numerical illustrations show a
significant difference between the adaptive control and the certainty
equivalence control, highlighting a substantial learning effect.
\end{abstract}

\section{Introduction}

Active learning in stochastic control is a topic of considerable interest,
particularly in situations where unobservable components can affect the state
evolution. In Feldbaum's seminal work \cite{feldbaum1960dual}, the concept of
the {\it dual effect} was introduced (see \cite{bar1974dual} for a treatment in
the field of stochastic control). The dual effect is the interplay between the
control's effect on the state and its influence on the estimation of
unobservable components through the controlled state. Consequently, the dual
effect plays a key role in problems with a learning effect in stochastic
control, as it describes a case of what is called, in ``modern'' language, the
much-studied trade-off between exploration and exploitation. Here, exploration
is in terms of knowledge / uncertainty about the unobservable component and
exploitation refers to cost optimization.

In this paper, we investigate the problem of Bayesian adaptive optimal
stochastic control in continuous time on a finite time horizon with separable
drift uncertainty introduced via a hidden, static random variable. We take a
Bayesian view of the estimation problem, that is we assume the prior is known
and subsequently update our beliefs. In this context, $\varepsilon$-optimal
controls are constructed using stability of viscosity solutions, and strong and
weak formulations are shown to agree in value.

\subsection{Problem description}

In the following, the controlled state $Y^u$ satisfies
\[
  \rd Y_s^u = \lambda^\intercal b(s,Y_s^u,u_s)\rd s
  				+ \sigma(s,Y_s^u,u_s)\rd W_s,\qquad Y_0=0
\]
where $u$ is the control and $\lambda$ is an unobservable random variable with
prior distribution $\mu$. For each control, the state generates a filtration
$\cY^u = \sigma(Y^u)$ called the ``observation filtration'' which is explicitly
control-dependent in the strong formulation. As a result of the nonlinear drift,
the {\it separation principle} first formulated in \cite{wonham1968separation}
generally does not hold. The separation principle roughly says that, under
certain conditions (typically linearity of the state dynamics), control and
estimation can be decoupled, and the problem of simultaneous control and
estimation separates into two problems; this does not apply here. The goal is to
minimize the cost functional
\[
  \cJ(u) = \E\Bigl[\int_0^T k(t,Y_t^u,u_t,\lambda)\rd t
  			+ g(Y_T^u,\lambda)\Bigr]
\]
over a class of controls $u$ which are adapted to their own generated
filtration $\cY^u$, that is, they only rely on the information on $\lambda$
and $W$ generated from observing $Y^u$.

In this formulation, the control is of closed-loop type and directly influences
the controller's knowledge of the hidden parameter $\lambda$, resulting in a
learning effect. More precisely, a trade-off between exploration and
exploitation arises, as the control must balance improving the estimation and
minimizing the cost functional.

Mathematically, the dependence of the observation filtration $\cY^u$ on the
control $u$ is a result of the strong formulation of the control problem. To
make the problem approachable using techniques of stochastic optimal control,
especially dynamic programming, we rewrite the state dynamics in the filtration
$\cY^u$. It turns out that, by introducing two control-dependent auxiliary
states, we can fully describe the conditional distribution of $\lambda$ given
$\cY^u$ in a Markovian way. As such, the original problem is embedded into a
finite-dimensional Markovian problem, and we can apply the techniques of dynamic
programming.

To gain further insight, we also study an alternative weak formulation of the
problem. A priori, given the effects of controlling the information flow and the
dependence of the filtration on the control in the strong formulation, it is not
clear if the optimal cost in the weak and strong formulation agree.
Nevertheless, we link the two formulations using the stochastic Perron method
\cite{bayraktar2013stochastic}, which yields a characterization of the
strong and weak value functions as the unique viscosity solution of the same HJB
equation, that is, the two value functions agree. The stochastic Perron method
allows for the derivation of the viscosity characterization of the value
functions without explicitly proving the dynamic programming principle (DPP) in
continuous time. Instead, one uses suitable notions of sub- and supersolutions
carrying the necessary intertemporal structure required for the proof.

Choosing a family of auxiliary control problems with the control set restricted
to piecewise constant controls as the class of supersolutions, we establish the DPP
for these controls and then build an approximation scheme in the sense of
\cite{barles1991convergence} converging from above to a viscosity subsolution
of the HJB equation which dominates the value function in the strong
formulation. Regarding the approximation from below, we consider stochastic
subsolutions (in the weak formulation) and show that their pointwise supremum is
a viscosity supersolution dominated by the value function in the weak
formulation. Consequently, a comparison principle for the HJB
equation implies that the limit of the control problems with piecewise constant
controls and \emph{the value functions of the problems in strong and weak
formulation agree}, which is to say that additional randomization does not
decrease the value of the control problem. Moreover, by establishing existence
of optimizers for the problems with piecewise constant controls, we are able to
\emph{construct $\varepsilon$-optimal controls} which can be efficiently
computed.

Problems with unknown dynamics and cost have highly relevant applications in,
for example, the problem of optimal execution in mathematical finance. In this
context, $Y^u$ represents the asset price under price impact and $b(t,y,u)=u$
is a simple model for unobservable permanent price impact $\lambda$
(see the monographs
\cite{cartea2015algorithmic,gueant2016financial,webster2023handbook} for an
introduction to problems of this type). This is a challenging issue in problems
of optimal execution, as price impact factors are generally unobservable and
have to be estimated from the affected price. Another field of application is in
motion control of robots; we solve a stylized example numerically in
\Cref{sec:Appl} to demonstrate and compare our results to a na\"{i}ve
control obtained by replacing $\lambda$ by its expectation $\E[\lambda]$ and a
certainty equivalent (CE) control which uses the current conditional mean as
the best estimate, but neglects the control of future available information.
The HJB equation is solved numerically using the deep Galerkin method
\cite{sirignano2018dgm} combined with policy iteration; convergence of this
method in the linear case was established recently in \cite{jiang2023global}.

\subsection{Related literature}

Previous studies involving unobservable components, for example in other
problems of adaptive control or partially observable control (e.g.\
\cite{benevs1991separation,bensoussan1992stochastic,fleming1982optimal}),
usually pose the problem in a weak formulation such that the filtration does not
depend on the control. There, the probability space and filtration are fixed,
weak solutions to the state equation are considered, and the control is
introduced via a change of measure to the cost functional. In the stochastic
optimal control literature, direct dependence of the observation filtration on
the control is usually avoided or resolved via the separation principle.
We are not aware of any work studying such problems in the strong
formulation when the separation principle does not hold.

In \cite{benevs1991separation,karatzas1992resolvent} on an infinite horizon and
in \cite{karatzas1993finite} on a finite horizon, a special case of our control
problem is considered in the weak formulation for quadratic cost on the state
variable in a class of what the authors refer to as ``wide-sense" admissible
controls. They show the intriguing result that there
does not exist an optimal control in the class of ``strict-sense" admissible
controls in the weak formulation but construct optimal controls in the class of
``wide-sense" admissible controls. This, together with the fact that the optimal
wide-sense admissible control is Markovian, suggests also that, in the strong
formulation, in general there does not exist an optimal admissible control, thus
justifying our search for $\varepsilon$-optimal controls.

A survey on the stochastic adaptive control problem from a control theory
perspective is given in \cite{kumar1985survey} and a recent broader view on
adaptive control is, e.g., the monograph \cite{astolfi2008nonlinear}. There, the
goal is generally not to find optimal controls but implementable ``good
controls" having ``good" asymptotic stability and robustness properties and to
prove suboptimality guarantees. They also treat non-Bayesian approaches. Often,
for example, an ergodic criterion is prescribed, and we mention
\cite{duncan1999adaptive,duncan1990adaptive} as examples in this direction.

The problem of adaptive control has also attracted attention over the last
decade due to the interest in reinforcement learning and, more generally,
machine learning and its applications in control theory. We mention two recent
examples: \cite{szpruch2021exploration} who examine the linear convex episodic
reinforcement learning problem in continuous time and \cite{mania2019certainty}
who derive suboptimality bounds for a certainty equivalent controller in the
linear-quadratic problem in discrete time. The reinforcement learning community
mostly considers asymptotic regret optimality and other types of asymptotic
optimality, whereas we consider optimality on a finite horizon. As a
consequence, in our formulation control effort matters at every point in time,
whereas for asymptotic optimality, control effort on every finite time interval
is irrelevant.

Finally, regarding our construction of $\varepsilon$-optimal controls, we point
out that a way to construct $\varepsilon$-optimal controls in the class of
piecewise constant controls was already suggested in the seminal monograph
\cite{krylov2008controlled}. In \cite{krylov1999approximating} the author
approximates the value function using value functions over the class of
piecewise constant controls and proves the DPP in that class. We use regularity
results appearing in \cite{krylov2008controlled} to establish the DPP
for the case of unbounded cost functions over the class of
piecewise constant controls. As a consequence of measurable selection, we obtain
optimal controls for the approximating problems, which are Markovian ``on a
time grid". Convergence, and thus $\varepsilon$-optimality, for the original
problem is not shown via direct analysis of the cost functional as in
\cite{krylov2008controlled} and \cite{krylov1999approximating}, but using
instead the theory of viscosity solutions and the main stability result of
\cite{barles1991convergence}, allowing us to additionally characterize the
value function as the unique continuous viscosity solution of the HJB
equation. The connection to the HJB equation was not made in
\cite{krylov1999approximating}, but explicit convergence rates in terms of the
step size were obtained, something that is not included in our approach.

The rest of our work is structured as follows. In \Cref{sec:theOP} we
formulate the optimal control problem in strong formulation. In
\Cref{sec:trans-FI} we rewrite the state's dynamics in its own filtration
and introduce two additional auxiliary states. A dynamic Markovian optimal
control problem in strong formulation is introduced into which the original
problem is embedded. In \Cref{sec:Weak-OP} we furthermore introduce the
Markovian control problem in weak formulation. In \Cref{sec:DPP-eps-opt}
we establish the main results of this paper by showing that the weak and strong
value functions coincide with the unique continuous viscosity solution of the
HJB equation and construct $\varepsilon$-optimal
controls. Finally, in \Cref{sec:Appl} we present an application of our results
to a toy problem of optimal control in robotics, highlighting a substantial
learning effect. \Cref{sec:Appendix} gathers several proofs which we consider
classical but chose to include to keep the paper self-contained.

\subsection*{Notation}

Throughout, we fix a complete probability space $(\Omega,\mfF,\P)$ and denote by
$T>0$ a finite time horizon. The symbols $\rD_x,\rD_{xy}$ denote the gradient
and Hessian with respect to the (multivariate) components~$x,y$, whereas
$\partial_z$ denotes the partial derivative with respect to the (scalar)
component~$z$. Finally, $\cS_d$ denotes the set of symmetric $d\times d$
matrices for any $d\in\N$.

\section{The control problem}\label{sec:theOP}

We pose the control problem beginning with an $\R^m$-valued random variable
$\lambda=(\lambda_1,\dots,\lambda_m)^\intercal$ with distribution $\mu$ under
$\P$. We furthermore suppose that $(\Omega,\mfF,\P)$ supports a one-dimensional
Brownian motion $W=(W_{t})_{t\in[0,T]}$ independent of $\lambda$, and we denote
by $\cF^{\lambda,W}$ the filtration generated by $\lambda$ and $W$, augmented
by all $\P$-nullsets.

\begin{assumption}\label{asspt-mu}
$\lambda$ is bounded, that is $|\lambda|\leq K$ for some $K\geq0$.
\end{assumption}

This assumption guarantees that the estimator function introduced in the next
section is Lipschitz continuous. Next, we consider controls taking values in
a compact metric space $\cU$. This is a standard
assumption which allows us to construct optimal controls using measurable
selection.
With this, the set of \emph{pre-admissible controls} is
\[
  \cA^{pre}:=\bigl\{u:\Omega\times[0,T]\to \cU: u \text{ is }
  \cF^{\lambda,W}\text{-progressively measurable}\bigr\}.
\]
For any $u\in\cA^{pre}$, the controller observes a controlled one-dimensional
state process $Y^u=(Y^u_t)_{t\in[0,T]}$, defined as the unique strong solution
of
\begin{equation}\label{eq:state-proc}
  \rd Y_t^u=\lambda^\intercal b(t,Y_t^u,u_t)\rd t
    + \sigma(t,Y_t^u,u_t)\rd W_t,\qquad Y_0=0,
\end{equation}
where $b:[0,T]\times\R\times\cU\to\R^{m}$ and
$\sigma:[0,T]\times\R\times\cU\to(0,\infty)$, and the initial state $Y_0=0$ is
chosen for simplicity.

\begin{assumption}\label{asspt-coeff}
The functions $b,\sigma$ are jointly continuous in all arguments. Furthermore,
there exist constants $L,M>0$ such that, for all $t\in[0,T], u\in\cU$,
\begin{align*}
  |b(t,y_1,u)-b(t,y_2,u)|+|\sigma(t,y_1,u)-\sigma(t,y_2,u)| 
    &\leq L|y_1-y_2|, & \forall y_1,y_2&\in\R,\\
  |b(t,y,u)|+|\sigma(t,y,u)|+|\sigma(t,y,u)^{-1}| &\leq M, &\forall y&\in\R.
\end{align*}
\end{assumption}

Lipschitz-continuity and boundedness ensure existence of a strong solution of
\cref{eq:state-proc} and boundedness of $b,\sigma^{-1}$ ensure that
a certain Girsanov transform used to derive an estimator for $\lambda$ is
valid; boundedness of $\sigma$ is for convenience. Uniformity of these
estimates is required for regularity of the approximating problems in
\Cref{subsec:infimum-supersolutions}.

By a standard existence result such as
\cite[Theorem 2.5.7]{krylov2008controlled}, for each pre-admissible control
$u\in\cA^{pre}$, there exists a pathwise unique $(\cF^{\lambda,W},\P)$-strong
solution of \eqref{eq:state-proc} which generates a filtration, called the
\emph{observation filtration}, which we highlight to be control-dependent in the
strong formulation.

\begin{definition}
For $u\in\cA^{pre}$, the observation filtration $\cY^u=(\cY_t^u)_{t\in[0,T]}$
is defined as the completed filtration generated by $Y^u$, that is 
$\cY_t^u:=\sigma(Y_s^u,s\in[0,t])\vee\cN$ for all $t\in[0,T]$, where $\cN$
denotes the system of $\P$-nullsets.
\end{definition}

\begin{remark}
\begin{itemize}
\item[1)] By definition, for every $u\in\cA^{pre}$, we have
$\cY^u\subset\cF^{\lambda,W}$.
\item[2)] In the state dynamics \cref{eq:state-proc}, the unobservable
$\lambda$ does not appear in the diffusion coefficient. If it did, the problem
would be fundamentally different.
\end{itemize}
\end{remark}

We now restrict the controls to those which are adapted to their
corresponding filtration $\cY^u$, that is, they only rely on the
information on $\lambda$ and $W$ obtained from observing the controlled state.

\begin{definition}
The set of admissible controls is defined as
\[
  \cA:=\bigl\{u\in\cA^{pre} : u\text{ is }\cY^u\text{-progressively
    measurable}\bigr\}. 
\]
\end{definition}

Admissible controls are therefore {\it closed-loop controls} in the sense that
they can utilize their effect on the observations; see \cite{bar1974dual} for
an elaborate discussion.

\begin{remark}
Care must be taken in defining the set of admissible controls, as can be seen
from the example of the Tsirel'son SDE (see, e.g.,
\cite[p.362]{revuz2013continuous}). There, a bounded, nonanticipating but
path-dependent drift (potentially a feedback map) is constructed, which
introduces additional independent randomness in the generated filtration, such
that ``$\cY^{u}\not\subseteq \cF^{\lambda,W}$". Our choice of admissible
controls, specifically $\cA\subset\cA^{pre}$, excludes these controls from
being admissible.

In general, a definition of the set of admissible controls adapted to the
filtration of a solution of a controlled SDE is circular. Indeed, in order for
the observation filtration to exist, for each control there needs to exist a
solution of the state equation \cref{eq:state-proc}, but to ensure such an
existence, one needs to specify the set of admissible controls. To the best of
our knowledge, there are three approaches:
\begin{itemize}
\item Fix an observation filtration $\cF$ and work with weak solutions to the
state equation and controls $u$ which are $\cF$-progressive (as in, e.g.,
\cite{benevs1991separation,fleming1982optimal}).
\item Choose as controls nonanticipative feedback maps
$\hat{u}:[0,T]\times\cC([0,T])\to\cU$, regular enough to define the state
process. This is an approach often followed by the reinforcement
learning community.
\item Use a ``reference" filtration, in our case $\cF^{\lambda,W}$, to define a
superset of controls, in our case $\cA^{pre}$, and guarantee existence of the
controlled state. Then restrict to those controls that are adapted to a
control-dependent subfiltration, in our case $\cY^u$.
\end{itemize}

Note that the set of controls $\cA$ is not a nice set to work with. For example,
it is not closed under addition, even when the sum takes values in $\cU$;
i.e.\ for $u_1,u_2\in\cA$ it is not clear whether $u_1+u_2$ is
$\cY^{u_1+u_2}$-progressive and thus admissible.
\end{remark}

For a control $u\in\cA$, we define the \emph{cost functional} as
\begin{equation}\label{eq:cost-functional-primal}
  \cJ(u):=\E\Bigl[\int_0^T k(t,Y_t^u,u_t,\lambda)\rd t + g(Y_T^u,\lambda)\Bigr]
    \qquad
	\text{subject to }\cref{eq:state-proc},
\end{equation}
where $k:[0,T]\times\R\times\cU\times\R\to[0,\infty)$ and
$g:\R\times\R\to[0,\infty)$. The goal is to minimize the cost functional
$\cJ(u)$ over all $u\in\cA$.

\begin{assumption}\label{asspt-cost}
$k$ and $g$ are jointly continuous. In addition, $k(t,y,u,\ell)$ is continuous
in $y$ uniformly over $u$ for each $(t,\ell)\in[0,T]\times\R$ and $g(y,\ell)$ is
uniformly continuous in $y$ for each $\ell$. Furthermore, we assume that there
exist $C,p>0$ with
\[
  |k(t,y,u,\ell)|+|g(y,\ell)|\leq C(1+|y|^p)
    \qquad\forall(t,y,u,\ell)\in[0,T]\times\R\times \cU\times\R.
\]
\end{assumption}

\begin{definition}\label{def:optimality}
We say that a control $u^*\in\cA$ is optimal if
\begin{equation}\label{eq:original-OP}
  \inf_{u\in\cA}\cJ(u)=\cJ(u^*)
\end{equation}
with $\cJ$ as in \cref{eq:cost-functional-primal}. Moreover, for every
$\varepsilon>0$, a control $u^{*,\varepsilon}\in\cA$ is $\varepsilon$-optimal
provided that $\cJ(u^{*,\varepsilon})\leq\inf_{u\in\cA}\cJ(u)+\varepsilon$.
\end{definition}

\begin{remark}
In the problem formulation considered here, $Y^u$ is generally not a
$\cY^u$-Markov process. In the next section, we derive a finite-dimensional
control problem under full information where for each control $u\in\cA$ the
coefficients of $Y^u$ are adapted to $\cY^u$, and which can be solved via
dynamic programming in the sense of $\varepsilon$-optimal controls. We thus
also obtain $\varepsilon$-optimal controls for the original 
problem~\cref{eq:original-OP}.
\end{remark}

\begin{remark}
\begin{itemize}
\item[1)] Our results extend to the case of a multidimensional state $Y^u$
under suitably adjusted assumptions. Since this only adds to the notation, we
stick to the one-dimensional case.
\item[2)] A motivating example for a multidimensional hidden parameter $\lambda$
can be constructed by considering
$\lambda=(1,\bar{\lambda},\bar{\lambda}^2,\dots,\bar{\lambda}^N)$ for some
real-valued $\bar{\lambda}$ and fixed $N\in\N$. This, together
with suitable coefficients $b_i(\argdot)$, $i=0,\dots,N$, could be used for a
polynomial approximation of a general drift function
$b(\argdot,\bar{\lambda})\approx\lambda^\intercal b(\argdot)$. The validity of
such an approximation is left open for future research, but may provide a way
to study the case of non-separable drift uncertainty.
\end{itemize}
\end{remark}

\section{Transformation to a full information problem}\label{sec:trans-FI}

We now embed the problem into one in which the coefficients and cost are
adapted to the observation filtration. Making use of techniques of Bayesian
inference, we find a finite-dimensional parametrization of the conditional
distribution of $\lambda$ by two $\cY^u$-adapted information states. This
suffices to transform the problem into a control problem under full information.

To begin with, we introduce a new probability measure $\Q^u$ as follows. Let
$u\in\cA$ and define two density processes
$\Lambda^u=(\Lambda^u_t)_{t\in[0,T]}$ and $Z^u:=(Z^u_t)_{t\in[0,T]}$ via
\begin{align*}
  \Lambda^u_t:=\frac{1}{Z_t^u}
  &:=\cE\Bigl(-\int_0^{\argdot}\frac{\lambda^\intercal b}{\sigma}(s,Y_s^u,u_s)
    \rd W_s\Bigr)_t\\
  &=\exp\Bigl(-\int_0^t \frac{\lambda^\intercal b}{\sigma}(s,Y_s^u,u_s)
    \rd W_s-\frac{1}{2}\int_0^t
    \frac{\lambda^\intercal bb^\intercal\lambda}{\sigma^2}(s,Y_s^u,u_s)
    \rd s\Bigr)\\
  &=\exp\Bigr(-\int_0^t \frac{\lambda^\intercal b}{\sigma^2}(s,Y_s^u,u_s)
    \rd Y_s^u+\frac{1}{2}\int_0^t
    \frac{\lambda^\intercal bb^\intercal\lambda}{\sigma^2}(s,Y_s^u,u_s)
    \rd s\Bigr).
\end{align*}
Since $\lambda$ is bounded by \Cref{asspt-mu} and $b,\sigma^{-1}$ are bounded
by \Cref{asspt-coeff}, $\Lambda^u$ is an $(\cF^{\lambda,W},\P)$-martingale and
defines a $\P$-equivalent probability measure $\Q^u$ on
$(\Omega,\cF^{\lambda,W}_T)$ with
\[
  \Q^u(A):=\E[\Lambda^u_T\mathds{1}_A]\qquad\forall A\in\cF^{\lambda,W}_T.
\]
By Girsanov's theorem \cite[Theorem 3.5.1]{karatzas1998brownian}, we find
that $Y^u$ is a standard $(\cF^{\lambda,W},\Q^u)$-Brownian motion independent
of $\lambda$ and, as $\cY^u\subset\F^{\lambda,W}$, it is also a
$(\cY^u,\Q^u)$-Brownian motion. Furthermore, $\lambda$ retains distribution
$\mu$ under $\Q^u$; see \cite[Lemma 2.2]{karatzas2001bayesian}. With this,
we define another density process $\hat{Z}^u=(\hat{Z}^u_t)_{t\in[0,T]}$, which
is also a $(\cY^u,\Q^u)$-martingale, by
\begin{align}
  \hat{Z}^u_t &:= \int_\R\exp\Bigl(\int_0^t \frac{\ell^\intercal b}{\sigma^2}
    (s,Y_s^u,u_s)\rd Y_s^u
    -\frac{1}{2}\int_0^t \frac{\ell^\intercal bb^\intercal\ell}{\sigma^2}
    (s,Y_s^u,u_s)\rd s\Bigr)\mu(\rd\ell)\label{eq:def-hat-Z}\\
  &=\E^{\Q^u}\Bigl[\exp\Bigl(\int_0^t\frac{\lambda^\intercal b}{\sigma^2}
    (s,Y_s^u,u_s)\rd Y_s^u
    -\frac{1}{2}\int_0^t\frac{\lambda^\intercal bb^\intercal\lambda}{\sigma^2}
    (s,Y_s^u,u_s)\rd s\Bigr)\Big|\cY^u_t\Bigr]\notag\\
  &=\E^{\Q^u}[Z^u_T|\cY^u_t]=\E^{\Q^u}[Z^u_t|\cY^u_t],\notag
\end{align}
where we used that $\lambda\sim\mu$ and is independent of $Y^u$ under $\Q^u$.

In order to avoid having to deal with matrix-valued SDEs, we introduce the
vectorization operator $\Xi:\cS_m\to\R^{m(m+1)/2}$ represented by the
$\{0,1\}$-valued three-tensor $\Xi\in\R^{m\times m\times m(m+1)/2}$ such that
\[
  \cS_m\ni A=(a_{i,j})_{1\leq i,j\leq m}\mapsto
    \Xi\otimes A=(a_{1,1},a_{2,1},a_{2,2},a_{3,1},\dots,a_{m,m})^\intercal
    \in\R^{m(m+1)/2},
\]
with $\otimes$ the matrix-tensor product in the first two components. With
this, we define two additional information state processes
$\Upsilon^u=(\Upsilon_t^u)_{t\in[0,T]}$ and $\Gamma^u=(\Gamma_t^u)_{t\in[0,T]}$
by
\[
  \Upsilon_t^u:=\int_0^t\frac{b}{\sigma^2}(s,Y_s^u,u_s)\rd Y_s^u
  \qquad\text{and}\qquad
  \Gamma_t^u:=\int_0^t\Xi\otimes\frac{bb^\intercal}{\sigma^2}(s,Y_s^u,u_s)\rd s,
\]
taking values in $\R^m$ and $\R^{m(m+1)/2}$, respectively. Note that the two
processes are related via $\langle\Upsilon^u\rangle=\Xi^{-1}\otimes\Gamma^u$.

Next, we define a function $F:\R^m\times\R^{m(m+1)/2}\to\R$ by
\begin{equation}\label{eq:def-filter-F}
  F(\upsilon,\gamma):=\int_\R\exp\Bigl(\ell^\intercal\upsilon
    -\frac{1}{2}\ell^\intercal (\Xi^{-1}\otimes\gamma)\ell\Bigr)\mu(\rd\ell)
\end{equation}
and, for any function $\phi:\R\to\R$, the transformation
$F[\phi]:\R^m\times\R^{m(m+1)/2}\to\R$ by
\begin{equation}\label{eq:def-filter-F-transform}
  F[\phi](\upsilon,\gamma):=\int_\R\phi(\ell)\exp\Bigl(\ell^\intercal\upsilon
    -\frac{1}{2}\ell^\intercal(\Xi^{-1}\otimes\gamma) \ell\Bigr)\mu(\rd\ell).
\end{equation}
We note that, by \cref{eq:def-hat-Z}, we can express $\hat{Z}^u$ in terms of
$F$ evaluated along $\Upsilon^u,\Gamma^u$, that is
$\hat{Z}^u=F(\Upsilon^u,\Gamma^u)$.

\begin{remark}
In \cite{ekstrom2022bayesian}, $F$ as defined in \cref{eq:def-filter-F} is
referred to as the \emph{Widder transform} of $\mu$ (due to
\cite{widder1944positive}, see also
\cite[Section 4.3 B]{karatzas1998brownian}). This should not be confused with
the Post--Widder transform common in the theory of Laplace transforms.
\end{remark}

For $u\in\cA$, we define the \emph{unnormalized conditional distribution} of
$\lambda$ under $\Q^u$ given $\cY^u$, denoted by
$\rho^u=(\rho_t^u)_{t\in[0,T]}$, as
\[
  \rho_t^u(A):=F[\mathds{1}_{A}](\Upsilon_t^u,\Gamma_t^u)
  =\int_A\exp\Bigl(\ell^\intercal\Upsilon_t^u
    -\frac{1}{2}\ell^\intercal(\Xi^{-1}\otimes \Gamma_t^u)\ell\Bigr)\mu(\rd\ell)
    \quad\forall A\in\cB(\R),
\]
where $\cB(\R)$ is the Borel $\sigma$-field over $\R$. Using Bayes' rule
\cite[Lemma 3.5.3]{karatzas1998brownian}, we can identify the \emph{normalized
conditional distribution} of $\lambda$, denoted by
$\pi^u=(\pi_t^u)_{t\in[0,T]}$, as
\[
  \pi_t^u(A):=\frac{\rho_t^u(A)}{\rho_t^u(\R)}
  =\frac{\E^{Q^u}[Z_t^u\mathds{1}_A(\lambda)|\cY^u_t]}{\E^{\Q^u}[Z_t^u|\cY^u_t]}
  =\P(\lambda\in A|\cY^u_t).
\]
Noticing that the gradient of $F$ with respect to $\upsilon$ is given by
\[
  F_\upsilon(\upsilon,\gamma)
  = \int_\R\ell\exp\Bigl(\ell^\intercal\upsilon
    -\frac{1}{2}\ell^\intercal(\Xi^{-1}\otimes\gamma)\ell\Bigr)\mu(\rd\ell)
  = F[\mathrm{id}](\upsilon,\gamma),
\]
we find that the conditional mean of $\lambda$ can be expressed in terms of the
process $m^u=(m_t^u)_{t\in[0,T]}$ given by
\begin{equation}\label{eq:relat-G-con-mean}
  m_t^u:=G(\Upsilon^u_t,\Gamma^u_t)=\E[\lambda|\cY^u_t]
    =\int_\R\ell\pi_t^u(\rd\ell),
\end{equation}
where $G:\R^m\times\R^{m(m+1)/2}\to\R$ is defined as
\begin{equation}\label{eq:def-G}
  G(\upsilon,\gamma):=\frac{F_\upsilon}{F}(\upsilon,\gamma).
\end{equation}
By \Cref{asspt-mu}, we find that $|G|$ is bounded by the same
constant $K>0$ as $\lambda$.

\begin{remark}
In the above definitions, we have always constructed continuous versions of the
conditional expectations, which are therefore measurable. The other identities
then hold in a $\P$-a.s.\ sense. 
\end{remark}

\begin{lemma}\label{lem:G-is-Lip}
The function $G$ is Lipschitz continuous.
\end{lemma}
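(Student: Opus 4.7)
The plan is to prove Lipschitz continuity of $G$ by exhibiting a uniform bound on its gradient. The key observation is that $G(\upsilon,\gamma)$ is the mean of the probability measure
\[
  \pi_{(\upsilon,\gamma)}(\rd\ell)
    := \frac{1}{F(\upsilon,\gamma)}\exp\Bigl(\ell^\intercal\upsilon
      - \tfrac{1}{2}\ell^\intercal(\Xi^{-1}\otimes\gamma)\ell\Bigr)\mu(\rd\ell),
\]
which is absolutely continuous with respect to $\mu$ and therefore supported in the ball $\{|\ell|\leq K\}$ by \Cref{asspt-mu}. Since the measure $\mu$ sits inside a bounded set, the integrals defining $F$, $F_\upsilon$, and all higher $\upsilon$- and $\gamma$-derivatives converge uniformly on any compact set, so differentiation under the integral sign is justified and $G$ is smooth.

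First I would differentiate with respect to $\upsilon$. A direct computation using the quotient rule gives
\[
  \rD_\upsilon G(\upsilon,\gamma)
    = \frac{F_{\upsilon\upsilon}}{F}(\upsilon,\gamma)
      - \frac{F_\upsilon F_\upsilon^\intercal}{F^2}(\upsilon,\gamma)
    = \int_\R \ell\ell^\intercal\pi_{(\upsilon,\gamma)}(\rd\ell)
      - \Bigl(\int_\R\ell\,\pi_{(\upsilon,\gamma)}(\rd\ell)\Bigr)
        \Bigl(\int_\R\ell\,\pi_{(\upsilon,\gamma)}(\rd\ell)\Bigr)^\intercal,
\]
which is exactly the covariance matrix of $\ell$ under $\pi_{(\upsilon,\gamma)}$. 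Since $|\ell|\leq K$ on the support of $\pi_{(\upsilon,\gamma)}$, this covariance has operator norm at most $K^2$.

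Next I would handle the $\gamma$-derivatives. For each component $\gamma_k$, differentiating the exponent produces a quadratic form $q_k(\ell) = -\tfrac12\ell^\intercal A_k\ell$, where $A_k$ is the symmetric matrix corresponding to the $k$-th basis element under $\Xi^{-1}$, so in particular $|q_k(\ell)|\leq C K^2$ uniformly on $\{|\ell|\leq K\}$. The quotient rule then yields
\[
  \partial_{\gamma_k} G(\upsilon,\gamma)
    = \int_\R \ell\, q_k(\ell)\,\pi_{(\upsilon,\gamma)}(\rd\ell)
      - \Bigl(\int_\R\ell\,\pi_{(\upsilon,\gamma)}(\rd\ell)\Bigr)
        \Bigl(\int_\R q_k(\ell)\,\pi_{(\upsilon,\gamma)}(\rd\ell)\Bigr),
\]
i.e.\ the covariance of $\ell$ and $q_k(\ell)$ under $\pi_{(\upsilon,\gamma)}$, which is again uniformly bounded by $CK^3$ on account of the bound $|\ell|,|q_k(\ell)|\leq CK^2$ on the support.

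Combining both estimates, $\rD G$ is bounded on all of $\R^m\times\R^{m(m+1)/2}$ by a constant depending only on $K$ and $m$, and convexity of the domain together with the mean value theorem yields global Lipschitz continuity. The only real bookkeeping issue is handling the tensor $\Xi$ cleanly so that the derivative of $\gamma\mapsto\ell^\intercal(\Xi^{-1}\otimes\gamma)\ell$ is correctly identified as a bounded quadratic form in $\ell$; once this is in hand, the proof reduces to the probabilistic identity that $\rD G$ consists of covariances of bounded random variables.
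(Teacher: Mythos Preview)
Your proof is correct and follows essentially the same approach as the paper: both show that $G$ is smooth and then bound $\rD G$ uniformly by using that $\mu$ has compact support, so each derivative of $F$ brings down a polynomial in $\ell$ bounded by a power of $K$, making all ratios $F_\alpha/F$ bounded. Your probabilistic repackaging of the derivatives as covariances under the tilted measure $\pi_{(\upsilon,\gamma)}$ is a nice touch but amounts to the same estimate the paper writes out directly.
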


\begin{proof}
Since $G$ is continuously differentiable, it suffices to show that its gradient
is uniformly bounded. For this, we note that
\[
  |\rD G|^2 =\Bigl|\frac{F_{\upsilon\upsilon}}{F}-G^2\Bigr|^2
    +\Bigl|\frac{F_{\upsilon\gamma}}{F}-\frac{F_\gamma}{F}G\Bigr|^2
	\leq 2\Bigl\{\Bigl|\frac{F_{\upsilon\upsilon}}{F}\Bigr|^2
    +\Bigl|\frac{F_\upsilon^2}{F^2}\Bigr|^2
	+\Bigl|\frac{F_{\upsilon\gamma}}{F}\Bigr|^2
    +\Bigl|\frac{F_\gamma F_\upsilon}{F^2}\Bigr|^2\Big\}.
\]
As $\mu$ is compactly supported by \Cref{asspt-mu}, for the mixed derivative we
obtain
\[
  |F_{\upsilon\gamma}(\upsilon,\gamma)|
    \leq \int_\R K^3\exp\Bigl(\ell^\intercal\upsilon
      -\frac{1}{2}\ell^\intercal\bigl(\Xi^{-1}\otimes\gamma\bigr)\ell\Bigr)
      \mu(\rd\ell)
    \leq K^3 F(\upsilon,\gamma),
\]
and similar estimates hold for the other terms. It follows that $G$ is
Lipschitz.
\end{proof}

Next, for all controls $u\in\cA$, we now define the corresponding
\emph{innovations process} $V^u=(V^u_t)_{t\in[0,T]}$ by
\begin{align}
  V^u_t &:= 
  \int_0^t\frac{1}{\sigma(s,Y_s^u,u_s)}\bigl(\rd Y_s^u-(m_s^u)^\intercal
    b(s,Y_s^u,u_s) \rd s\bigr)\label{eq:def-innov-process}\\
  &=W_t+\int_0^t(\lambda-m_s^u)^\intercal\frac{b}{\sigma}(s,Y_s^u,u_s)
    \rd s.\notag
\end{align}
The following key lemma can be found in
\cite[Lemma 11.3]{liptser2013statisticsII} or
\cite[Lemma 22.1.7]{cohen2015stochastic}.

\begin{lemma}
For all controls $u\in\cA$, the corresponding innovations process $V^u$ is a
standard $(\cY^u,\P)$-Brownian motion.
\end{lemma}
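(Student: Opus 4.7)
The plan is to invoke Lévy's characterization theorem: once $V^u$ is shown to be a continuous $(\cY^u,\P)$-martingale starting at $0$ with quadratic variation $\langle V^u\rangle_t=t$, it is automatically an $(\cY^u,\P)$-Brownian motion. Continuity and $V^u_0=0$ are immediate from \cref{eq:def-innov-process}. For $\cY^u$-adaptedness I would use the first of the two representations: $Y^u$ is $\cY^u$-adapted by construction, $u$ by admissibility, $m^u=G(\Upsilon^u,\Gamma^u)$ because $\Upsilon^u$ and $\Gamma^u$ are $\cY^u$-adapted by inspection of their defining integrals, and $1/\sigma$ is bounded. The quadratic variation is most easily read off from the second representation: since $|\lambda|,|m^u|\leq K$ and $b/\sigma$ is bounded by \Cref{asspt-mu} and \Cref{asspt-coeff}, the drift term has finite variation, hence $\langle V^u\rangle_t=\langle W\rangle_t=t$.

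The main step is the $(\cY^u,\P)$-martingale property. I would verify $\E[(V^u_t-V^u_s)\mathds{1}_A]=0$ for every $A\in\cY^u_s$ by treating the two summands in the second representation separately. Since $A\in\cY^u_s\subseteq\cF^{\lambda,W}_s$ and $W$ is an $(\cF^{\lambda,W},\P)$-Brownian motion, the Brownian increment satisfies $\E[(W_t-W_s)\mathds{1}_A]=0$ directly. For the absolutely continuous part, Fubini (justified by the uniform boundedness above) together with the tower property applied through $\cY^u_r\subseteq\cF^{\lambda,W}_r$ yields
\[
  \E\Bigl[\int_s^t(\lambda-m_r^u)^\intercal\tfrac{b}{\sigma}(r,Y_r^u,u_r)\rd r\,\mathds{1}_A\Bigr]
  = \int_s^t\E\Bigl[\E[\lambda-m_r^u\mid\cY^u_r]^\intercal\tfrac{b}{\sigma}(r,Y_r^u,u_r)\mathds{1}_A\Bigr]\rd r = 0,
\]
because $m_r^u=\E[\lambda\mid\cY^u_r]$ by \cref{eq:relat-G-con-mean}. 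Integrability of $V^u_t$ is clear since the drift integrand is uniformly bounded and $W_t\in L^2$.

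The only real obstacle is conceptual rather than technical: the innovations process lives in the sub-filtration $\cY^u\subseteq\cF^{\lambda,W}$, so one cannot simply quote that $W$ is a Brownian motion in $\cY^u$, as it is not $\cY^u$-adapted. The workaround is precisely the decomposition above, which trades the non-adaptedness of $W$ on the diffusion side for the projection identity $\E[\lambda\mid\cY^u_r]=m_r^u$ on the drift side. Assembling adaptedness, the martingale property, and the computed quadratic variation, Lévy's theorem then concludes the proof; this is the classical innovations argument instantiated in our strong-formulation setting.
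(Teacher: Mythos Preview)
Your argument is correct and is precisely the classical innovations argument via L\'evy's characterization. The paper does not give its own proof of this lemma; it merely cites \cite[Lemma 11.3]{liptser2013statisticsII} and \cite[Lemma 22.1.7]{cohen2015stochastic}, and the proof found in those references is essentially the one you have written: adaptedness from the first representation, quadratic variation from the second, and the $(\cY^u,\P)$-martingale property from the projection identity $\E[\lambda\mid\cY^u_r]=m_r^u$ combined with $\cY^u\subseteq\cF^{\lambda,W}$.
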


Using \cref{eq:relat-G-con-mean} and \cref{eq:def-innov-process}, we can rewrite
$Y^u$ with dynamics in the observation filtration $\cY^u$ as
\[
  Y_t^u=\int_0^tG(\Upsilon^u_s,\Gamma^u_s)^\intercal b(s,Y_s^u,u_s)\rd s
    +\int_0^t\sigma(s,Y_s^u,u_s)\rd V_s^u.
\]
Similarly, the first auxiliary state $\Upsilon^u$ can be written as
\[
  \Upsilon_t^u=\int_0^t\Bigl[\frac{b}{\sigma^2}
    (s,Y_s^u,u_s)G(\Upsilon^u_s,\Gamma^u_s)^\intercal b(s,Y_s^u,u_s)\Bigr]\rd s
  +\int_0^t\frac{b}{\sigma}(s,Y_s^u,u_s)\rd V_s^u.
\]
We now define the transformed running and terminal cost function $\tilde{k}$ and
$\tilde{g}$ as
\[
  \tilde{k}(t,y,\upsilon,\gamma,u)
    :=\frac{F[k(t,y,u,\argdot)](\upsilon,\gamma)}{F(\upsilon,\gamma)}
  \quad\text{and}\quad
  \tilde{g}(y,\upsilon,\gamma)
    :=\frac{F[g(y,\argdot)](\upsilon,\gamma)}{F(\upsilon,\gamma)},
\]
which are continuous as $k$ and $g$ are uniformly continuous in their last
argument by combining \Cref{asspt-mu} and \Cref{asspt-cost}. Indeed, with
$z:=(t,y,\upsilon,\gamma,u)$ and $z_n:=(t_n,y_n,\upsilon_n,\gamma_n,u_n)$ such
that $z_n\to z$, we immediately have
\begin{align*}
  F[k(t_n,y_n,u_n,\argdot)](\upsilon_n,\gamma_n)
    &= \int_\R k(t_n,y_n,u_n,\ell)\exp\Bigl(\ell^\intercal\upsilon_n
      -\frac{1}{2}\ell^\intercal(\Xi^{-1}\otimes\gamma_n)\ell\Bigr)
      \mu(\rd\ell)\\
    &\to F[k(s,y,u,\argdot)](\upsilon,\gamma)
\end{align*}
by dominated convergence. This is justified as $\mu$ is compactly supported, and
the integrand is jointly continuous and as a result bounded along
$(z_n)_{n\in\N}$. Thus, $\tilde{k}$ is continuous as a composition of continuous
functions. Taking the conditional expectation with respect to $\cY^u$ in the cost
functional and using Fubini's theorem, we obtain
\[
  \cJ(u)=\E\Bigl[\int_0^T\tilde{k}(t,Y_t^u,\Upsilon_t^u,\Gamma_t^u,u_t)\rd t
    +\tilde{g}(Y_T^u,\Upsilon_T^u,\Gamma_T^u)\Bigr].
\]
This is now a control problem under full information, but with control-dependent
noise and filtration, which we subsequently formulate dynamically. For ease of
notation, we let $\tilde{m}:=1+m+m(m+1)/2$.
Define the \emph{extended state
space} $\S:=[0,T]\times\R^{\tilde{m}}$ and, for $(t,x)\in\S$, the
$\tilde{m}$-dimensional \emph{extended state}
$X^{u;t,x}:=(A^{u;t,x},\Upsilon^{u;t,x},\Gamma^{u;t,x})$ as the solution of
\begin{equation}\label{eq:aug-state}
  \rd X^u_s
    =\begin{pmatrix}
      G(\Upsilon^u_s,\Gamma^u_s)^\intercal b\big(s,A_s^u,u_s)\\
      \frac{b}{\sigma^2}(s,A_s^u,u_s)G(\Upsilon^u_s,\Gamma^u_s)^\intercal
        b(s,A_s^u,u_s)\\
      \Xi\otimes\frac{bb^\intercal}{\sigma^2}(s,A_s^u,u_s)
	\end{pmatrix}\rd s
	+\begin{pmatrix}
	  \sigma(s,A_s^u,u_s)\\
        \frac{b}{\sigma}(s,A_s^u,u_s)\\
        0
	\end{pmatrix}\rd V_s^u
\end{equation}
for $s\in[t,T]$ with initial condition $X^u_t=x$. Moreover, we define the drift
and diffusion coefficient functions $f,\Sigma:\S\times \cU\to\R^{\tilde{m}}$
with $x=(a,\upsilon,\gamma)$ as
\[
  f(t,x,u):=\begin{pmatrix}
    G(\upsilon,\gamma)^\intercal b(t,a,u)\\
    \frac{b}{\sigma^2}(t,a,u)G(\upsilon,\gamma)^\intercal b(t,a,u)\\
    \Xi\otimes\frac{bb^\intercal}{\sigma^2}(t,a,u)
  \end{pmatrix}
  \quad\text{and}\quad
  \Sigma(t,x,u):=\begin{pmatrix}
    \sigma(t,a,u)\\
    \frac{b}{\sigma}(t,a,u)\\
    0
  \end{pmatrix}.
\]
For each fixed control $u\in\cA$, the coefficient functions
$f(\argdot,u),\Sigma(\argdot,u):\S\to\R^{\tilde{m}}$ are products of bounded,
Lipschitz continuous functions, so they are also Lipschitz continuous. Hence, by
a standard existence result (such as \cite[Theorem 2.5.7]{krylov2008controlled}),
for every initial condition $(t,x)\in\S$ and control $u\in\cA$ there exists a
pathwise unique $(\cY^u,\P)$-strong solution of \eqref{eq:aug-state}. Note that
the diffusion coefficient $\Sigma$ is degenerate, as the one-dimensional
innovations process is the only driving noise and the third state variable is not even
diffusive.

With this, we define the \emph{extended cost functional} as
\begin{equation}\label{eq:def:extended-cost-fucntional}
  \cJ(u;t,x):=\E\Bigl[\int_t^T\tilde{k}(s,X_s^{u;t,x},u_s)\rd s
    +\tilde{g}(X_{T}^{u;t,x})\Bigr]\qquad 
	\text{subject to }\cref{eq:aug-state}
\end{equation}
and the value function of the control problem as
\[
  V(t,x):= \inf_{u\in\cA}\cJ(u;t,x).
\]
The notion of ($\varepsilon$-)optimality given in \Cref{def:optimality} applies
for each $(t,x)\in\S$ in the obvious way.

\begin{remark}\label{rem:connection-aug-orig}
\begin{itemize}
\item[1)] The extended control problem includes the original optimization
problem \cref{eq:original-OP}. Specifically, for the solution $X^{u;0,0}$ of
\cref{eq:aug-state} we have $Y^u=A^{u;0,0}$ (as $Y^u_0=0$) for all $u\in\cA$,
hence $\cJ(u)=\cJ(u;0,0)$. This means that a $(0,0)$-optimal control for
the extended cost functional \cref{eq:def:extended-cost-fucntional} is also
optimal for the original cost functional \cref{eq:original-OP}, as the
minimization is over $\cA$ in both cases.
\item[2)] In general $V^u$ does not generate $\cY^u$ (this is the
\emph{innovations problem}, see e.g.\ \cite{heunis2011innovations}) but is only
adapted to it. However, we will see that, for the $\varepsilon$-optimal controls
$u^{\varepsilon}$ constructed below, the corresponding innovations process
generates the observation filtration, since the solution of the state equation
is strong and $\Sigma$ admits a left-inverse.
\end{itemize}
\end{remark}

\begin{remark}
Obtaining a finite dimensional description of the conditional distribution for a
\emph{time-dependent} hidden process $\lambda=(\lambda_t)_{t\in[0,T]}$ is known
only in two cases, first in the conditionally Gaussian case (see
\cite[Chapter 12]{liptser2013statisticsII}) and second for a finite-state Markov
chain (see \cite[Chapter 9]{liptser2013statisticsI}) for which sufficient
conditions for optimality are given in \cite{caines1985optimal} in the form of a
verification theorem. Since in our setting $\lambda$ is static, we can work with
a general distribution $\mu$ and still obtain an $(m+m(m+1)/2)$-dimensional
description of the conditional distribution due to the separable structure of
the drift.
\end{remark}

In the following, for any $x\in\R^{\tilde{m}}$, we write $x=(a,\upsilon,\gamma)$
with $a\in\R$, $\upsilon\in\R^m$, and $\gamma\in\R^{m(m+1)/2}$. With this, the
HJB equation for the extended control problem reads
\[
  \partial_tV+\inf_{u\in\cU}\{\cL^u V+\tilde{k}(\argdot,u)\}=0,
  \qquad V(T,\argdot)=\tilde{g},
\]
where for $u\in\cU,b=b(\argdot,u),\sigma=\sigma(\argdot,u)$ the
infinitesimal generator $\cL^u$ is given by
\begin{equation}\label{eq:inf_generator}
  \cL^u=G^\intercal b\,\partial_a
    +\frac{b}{\sigma^2}G^\intercal b\,\rD_\upsilon
	+\Bigl(\Xi\otimes\frac{bb^\intercal}{\sigma^2}\Bigr)\rD_\gamma
	+\frac{1}{2}\trace\Big[\Bigl(\sigma^2\rD_{aa} +\frac{bb^\intercal}{\sigma^2}
      \rD_{\upsilon\upsilon}
	+2b\rD_{a\upsilon}\Bigr)\Big].
\end{equation}
The HJB equation is fully nonlinear and degenerate as the second-order
coefficient matrix is always of rank one, thus $\cL^u$ is not uniformly
elliptic.

\begin{remark}\label{rem:HJB}
To the best of our knowledge, no explicit solutions of the HJB equation have
been
obtained beyond the special cases of
\cite{benevs1991separation,karatzas1992resolvent} (infinite horizon) and
\cite{karatzas1993finite} (finite horizon). Furthermore, there are no existence
results which yield a solution sufficiently regular to apply classical
verification. Such a classical verification theorem can nevertheless still be
proved under the usual regularity assumptions. In
\cite{benevs1991separation,karatzas1992resolvent,karatzas1993finite}, explicit
classical solutions to the respective HJB equations were obtained. They also
show
that the optimally controlled state process does not admit a strong solution.
\end{remark}

\subsection{Connection of control and higher-order moments}
\label{sec:Active-Learning-Aspect}

In this subsection we briefly elaborate on the connection of the control and
higher order conditional moments of $\lambda$. In particular, we draw
connections
to the conditional variance and the dual effect mentioned in the introduction.
For simplicity, calculations are presented for one-dimensional $\lambda$ only.
By definition of $F$, we see that with
\[
  G_k(\upsilon,\gamma):=\frac{\partial_\upsilon^k F}{F}(\upsilon,\gamma),
\]
the conditional moments of $k$-th order are given by
\[
  m^{k;u}_t:=G_k(\Upsilon^u_t,\Gamma^u_t)=\E[\lambda^k|\cY_t^u].
\]
As a consequence the conditional variance in the initial problem is given by
\[
  \mathrm{var}^u_t:=\V[\lambda|\cY_t^u]
  =G_2(\Upsilon^u_t,\Gamma^u_t)
    -G(\Upsilon^u_t,\Gamma^u_t)^{2},
\]
and straightforward computations detailed in \Cref{app:Active-Learning} show
that
\[
  \rd\text{var}^u_t = -\Bigl[G^2(G_2+G^2)\Bigr](\Upsilon^u_t,\Gamma^u_t)
    \frac{b^2}{\sigma^2}(t,Y^u_t,u_t)\rd t
    +G_{\upsilon\upsilon}(\Upsilon^u_t,\Gamma^u_t)
    \frac{b}{\sigma}(t,Y^u_t,u_t)\rd V^u_t.
\]
From this, we see that the controls exhibit the dual effect according to the
definition given in \cite{bar1974dual}. There, the control is said to have
\emph{no dual effect of order $k$} (or \emph{neutral} in the language of
\cite{feldbaum1960dual}), if all moments of higher order are independent of the
control in the sense that they agree with the conditional moments given $\cY^0$
with $u\equiv0$ (the ``inactive control"). The control is said to \emph{have a
dual effect}, if it affects any higher order moment. Here, calculations
similar to the above show that
\[
  \rd m^{2;u}_t = G_{2,\upsilon}(\Upsilon^u_t,\Gamma^u_t)
    \frac{b}{\sigma}(t,Y^u_t,u_t)\rd V^u_t,
\]
i.e.\ the second moment is generally control-dependent and the dual effect is
present.

\section{Weak formulation}\label{sec:Weak-OP}

In this section we formulate the extended control problem in its weak
formulation. We allow for the most general weak admissible controls in which the
underlying filtered probability space and Brownian motion are part of the
control, thereby introducing a control problem smaller in value than
\cref{eq:def:extended-cost-fucntional}. The problem is then transformed into one
under full information, exactly as in \Cref{sec:trans-FI}. The purpose of this
is to show that the values in strong and weak formulation agree and to allow
comparison of these approaches from a modeling perspective.

\begin{definition}
For $(t,x)\in\S$, a \emph{weak admissible control} is a seven-tuple
$U^{t,x}=(\Omega^{t,x},\mfF^{t,x},\cF^{t,x},\P^{t,x},W^{t,x},X^{t,x},u^{t,x})$
such that
\begin{itemize}
\item[1)] $(\Omega^{t,x},\mfF^{t,x},\P^{t,x})$ is a probability space and the
filtration $\cF^{t,x}$ satisfies the usual conditions;
\item[2)] $W^{t,x}$ is a one-dimensional standard
$(\cF^{t,x},\P^{t,x})$-Brownian motion;
\item[3)] $u$ is $\cF^{t,x}$-progressively measurable and $\cU$-valued;
\item[4)] $X^{t,x}=(A^{t,x},\Upsilon^{t,x},\Gamma^{t,x})$ is a continuous and
adapted process and the unique strong solution of the state equation
\cref{eq:aug-state} in $(\Omega^{t,x},\mfF^{t,x},\cF^{t,x},\P^{t,x},W^{t,x})$
with $X_t=x$, that is, the tuple
$(\Omega^{t,x},\mfF^{t,x},\cF^{t,x},\P^{t,x},W^{t,x},X^{t,x})$ is a weak
solution of the SDE with that initial condition.
\end{itemize}
All objects above are understood with time index set $[t,T]$. The set of all
weak admissible controls is denoted by $\cA^{weak}(t,x)$.
\end{definition}

With this, the value function over the set of weak controls is defined as
\[
  V^{weak}(t,x):=\inf_{U^{t,x}\in\cA^{weak}(t,x)}
    \E^{t,x}\Bigl[\int_t^T\tilde{k}(s,X_s^{t,x},u^{t,x}_s)\rd s
    +\tilde{g}(X_T^{t,x})\Bigr],
\]
where $\E^{t,x}$ is the expectation under $\P^{t,x}$. From the fact that the
state equation \cref{eq:aug-state} admits a strong solution, it follows that
the set of weak admissible controls is non-empty. As any strong solution of
\cref{eq:aug-state} is also a weak solution and the filtration is part of the
control in this formulation, it is clear that we can embed
$\cA\hookrightarrow\cA^{weak}(t,x)$, and hence
\begin{equation}\label{eq:V-V-weak}
  V \geq V^{weak}\qquad \text{on }\S.
\end{equation}
This is the first key inequality which allows us to bound $V$ from below by a
``nicer" problem, where there is no dependence of the filtration and noise on
the control.

\begin{remark}\label{rem:comparsion-of-formulations}
The case for a weak formulation, like the one in this subsection, is the
unobservability of the components of the state, in our case $\lambda,W$.
Specifically, we cannot identify the Brownian motion $W$ by observing only the
state $Y^u$ and, as a consequence, should be comfortable allowing the
distribution and the driving Brownian motion to vary as part of the control,
hence leading to the weak formulation. Furthermore, as long as the filtration
generated by the state is only extended by independent ``auxiliary" randomness,
this does not violate the information pattern of basing decisions only on
observations of the state.

The case against a weak formulation can also be made, as the noise process in
the form of Brownian motion is generally control-independent and given ``by
nature", i.e.\ it is fixed. Furthermore, in order to make theoretical use of
the above construction, one might have to work in a filtration strictly larger
than the filtration generated by the state process $Y^u$, which in a sense
violates a part of the idea behind the model, namely that the decision has to
be made only on the basis of the information generated by the state $Y^u$.
Another even more questionable point concerns the wide- and strict-sense
admissible controls considered in
\cite{benevs1991separation,karatzas1992resolvent,karatzas1993finite}. There,
the ``observation filtrations" to which the controls are adapted are required
to be independent of $\lambda$. But $\lambda$ is part of the dynamics of the
observable state, and thus should certainly \emph{not} be independent of the
observation filtration. In the strong formulation, $\cY^u$ is generally not
independent of $\lambda$ under $\P$.
\end{remark}

\section{Viscosity characterization and
			\texorpdfstring{$\boldsymbol\varepsilon$}{epsilon}-optimal controls}
\label{sec:DPP-eps-opt}

Since obtaining a classical solution of the HJB equation is out of reach, as
pointed out in \Cref{rem:HJB}, we consider solutions in the viscosity sense
(see \cite{crandall1992user}) instead. Recall that the HJB equation is given by
\begin{equation}\label{eq:HJB}\tag{HJB}
  \partial_tV+\inf_{u\in\cU}\{\cL^uV+\tilde{k}(\argdot,u)\}=0,
  \qquad V(T,\argdot)=\tilde{g}
\end{equation}
on $\S$, where the infinitesimal generator $\cL^u$ is defined in \cref{eq:inf_generator}. We show that the value functions in the strong
and weak formulation of the problem are equal to the unique viscosity solution
of \cref{eq:HJB} using the stochastic Perron method. Moreover, we construct
piecewise constant $\varepsilon$-optimal controls, which are also Markovian on a
time-discretized grid. This allows us to link the strong and weak formulation
in a clean way.

Our agenda for the remainder of this section is to apply a version of the
stochastic Perron method \cite{bayraktar2013stochastic}. More precisely, we
\begin{itemize}
\item prove a comparison principle for semicontinuous viscosity solutions of
the HJB equation;
\item using \cite{barles1991convergence}, show that the infimum of value
functions of a family of auxiliary control problems with piecewise constant
controls is a viscosity subsolution of the HJB equation;
\item show that the supremum of stochastic subsolutions in the weak
formulation with weak admissible controls is a viscosity supersolution.
\end{itemize}
We can then use the comparison principle and the auxiliary control problems to
sandwich the value function $V$ and show that it is itself a viscosity solution
of \cref{eq:HJB}.

\subsection{The comparison principle}\label{subsec:comparison-principle}

The comparison principle for the HJB equation \cref{eq:HJB} is a standard
result. The main difficulty in the proof consists in controlling the viscosity
sub- and supersolutions at infinity, which can be achieved by constructing a
strict classical subsolution which grows sufficiently fast. The proof of the
comparison principle is deferred to \Cref{app:comparison}.

\begin{theorem}\label{th:comparison-principle}
Let $U:\S\to\R$ be an upper semicontinuous viscosity subsolution and
$W:\S\to\R$ be a lower semicontinuous viscosity supersolution of \cref{eq:HJB} for which there exist $C,q>0$ such that
\[
  0\leq v(t,x)\leq C(1+|x|^q)\qquad \forall v\in\{U,W\}, (t,x)\in\S.
\]
If $U(T,\argdot)\leq W(T,\argdot)$ on $\R^{\tilde{m}}$, then $U\leq W$
everywhere on $\S$.
\end{theorem}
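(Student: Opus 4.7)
The proof follows the standard Ishii doubling-of-variables argument for parabolic Hamilton--Jacobi--Bellman equations, adapted to the unbounded spatial domain $\R^{\tilde{m}}$ via a polynomial penalty. Following the hint in the statement, the key preliminary step is to construct a smooth $\varphi : \S \to [0,\infty)$ of the form $\varphi(t,x) = e^{\lambda(T-t)}(1+|x|^{2r})$ with $2r > q$ and $\lambda > 0$ to be chosen, i.e.\ growing strictly faster than the a priori polynomial bound of order $q$. By \Cref{asspt-coeff} the coefficients $b, \sigma, \sigma^{-1}$ are uniformly bounded and by \Cref{lem:G-is-Lip} so is $G$, so $f(\cdot,u),\Sigma(\cdot,u)$ in \cref{eq:aug-state} are uniformly bounded in $(t,x,u)$. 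A direct computation then gives $|\cL^u\varphi(t,x)| \le C_r e^{\lambda(T-t)}(1+|x|^{2r-1})$ uniformly in $u \in \cU$, with $C_r$ independent of $\lambda$; taking $\lambda$ large enough yields $\partial_t\varphi + \sup_{u\in\cU}\cL^u\varphi \le -c < 0$ on $\S$ for some $c>0$. Combining this with the supersolution property of $W$ and the elementary estimate $\inf_u\{A_u+B_u\} \le \inf_u A_u + \sup_u B_u$, one checks that $W_\eta := W + \eta\varphi$ is a lower semicontinuous \emph{strict} viscosity supersolution of \cref{eq:HJB} with margin $\eta c$ for every $\eta>0$. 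It then suffices to prove $U \le W_\eta$ on $\S$ for every $\eta > 0$ and let $\eta \downarrow 0$.

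Fix $\eta>0$ and assume toward a contradiction that $M := \sup_\S(U - W_\eta) > 0$. Since $U - W_\eta \le C(1+|x|^q) - \eta e^{\lambda(T-t)}(1+|x|^{2r}) \to -\infty$ as $|x|\to\infty$ uniformly in $t$, this supremum is attained on a compact subset of $\S$, and the terminal inequality $U(T,\cdot) \le W(T,\cdot)$ excludes the slice $\{T\}\times\R^{\tilde{m}}$. Doubling the spatial variable via
\[
  \Phi_\alpha(t,x,y) := U(t,x) - W_\eta(t,y) - \alpha|x-y|^2 - \varepsilon\bigl(|x|^{2r} + |y|^{2r}\bigr),
\]
with $\varepsilon > 0$ a further localization sent to zero at the end, a maximizer $(t_\alpha,x_\alpha,y_\alpha)$ enjoys the classical properties $\alpha|x_\alpha-y_\alpha|^2 \to 0$, $\Phi_\alpha(t_\alpha,x_\alpha,y_\alpha) \to M$, and $t_\alpha < T$ for $\alpha$ large. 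The parabolic Crandall--Ishii lemma then yields parabolic super- and sub-jets for $U$ at $(t_\alpha,x_\alpha)$ and $W_\eta$ at $(t_\alpha,y_\alpha)$ sharing a common time derivative, with spatial gradient $p_\alpha = 2\alpha(x_\alpha - y_\alpha)$ and Hessians satisfying the standard semidefinite trace estimate. Subtracting the viscosity subsolution inequality for $U$ from the strict supersolution inequality for $W_\eta$, the difference of the Hamiltonians vanishes as $\alpha\to\infty$ by the uniform-in-$u$ Lipschitz continuity of $f(\cdot,u),\Sigma(\cdot,u)$ (from \Cref{lem:G-is-Lip} and \Cref{asspt-coeff}) and uniform continuity of $\tilde{k}$, while the strict residual $-\eta c$ survives and contradicts $M>0$. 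Letting $\eta \downarrow 0$ then gives $U \le W$ on $\S$.

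The main obstacle is the construction of $\varphi$: the HJB operator is fully nonlinear and degenerate (the second-order matrix in \cref{eq:inf_generator} has rank one), and the candidate sub-/supersolutions and running cost grow polynomially at rates that are not matched a priori, so identifying a smooth test function that both dominates this growth and produces a uniformly strict signed inequality for $\partial_t + \sup_u \cL^u$ hinges on the explicit uniform boundedness assumptions on $b,\sigma,\sigma^{-1}$ (\Cref{asspt-coeff}) and on $G$ (\Cref{lem:G-is-Lip}). Once this perturbation is in place, the rank-one diffusion poses no serious difficulty in Ishii's lemma, since the required trace estimate closes cleanly given the uniform Lipschitz continuity of $\Sigma(\cdot,u)$ in the spatial argument.
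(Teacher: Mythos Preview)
Your argument is correct and follows the standard template; it differs from the paper's proof only in which side of the inequality you perturb. The paper builds a strict classical \emph{sub}solution $\psi(t,x)=-|x|^{\tilde q}e^{\zeta(T-t)}-M(1+T-t)$ with $\tilde q>q$ and replaces $U$ by the convex combination $U^\rho:=\tfrac{\rho-1}{\rho}U+\tfrac{1}{\rho}\psi$, which is a $\kappa/\rho$-strict viscosity subsolution. Because $U^\rho\to-\infty$ as $|x|\to\infty$ while $W\ge 0$, the doubled functional $U^\rho(t,x)-W(t,\hat x)-\tfrac{k}{2}|x-\hat x|^2$ is automatically coercive in both spatial variables, so no auxiliary $\varepsilon$-penalty is needed and only the single limit $\rho\to\infty$ remains at the end.

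Your dual choice of perturbing $W$ to $W_\eta=W+\eta\varphi$ is equally valid, and your verification that $W_\eta$ is a strict supersolution via $\inf_u\{A_u+B_u\}\le\inf_u A_u+\sup_u B_u$ is clean. The price is that $U(t,x)-W_\eta(t,y)-\alpha|x-y|^2$ need not be coercive in $x$ when $q>2$ (the growth of $U$ in $x$ is not controlled by either $W_\eta(t,y)$ or the quadratic penalty), which is precisely why you need the extra $\varepsilon(|x|^{2r}+|y|^{2r})$ and the additional limit $\varepsilon\downarrow0$. This is routine, but you should note that the maximizers $(t_\alpha,x_\alpha,y_\alpha)$ remain in a fixed compact set as $\alpha\to\infty$ and then $\varepsilon\to0$ (because $U-W_\eta>M/2$ forces $x$ into a bounded set independent of $\varepsilon$), so the $\varepsilon$-correction terms in the jets are genuinely $O(\varepsilon)$ uniformly. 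Also, for $\varphi$ to be $C^2$ and your gradient/Hessian estimate to hold near the origin you implicitly need $2r\ge 2$; the paper makes the analogous choice $\tilde q\ge 2$ explicit.
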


\subsection{The infimum of supersolutions}\label{subsec:infimum-supersolutions}

The most challenging step in our approach is the viscosity subsolution property
of the value function $V$ in the strong formulation. The main problem is that,
in general, the set of admissible controls is not closed under pasting. That is,
given two controls $u_1,u_2\in\cA$ and any $t\in[0,T]$, the pasted control
$u:=u_1\mathds{1}_{[0,t]}+u_2\mathds{1}_{(t,T]}$ can fail to be admissible.
Since closedness under pasting is fundamental for the DPP to be valid, it is
not immediately obvious if the value function $V$ in the strong formulation can
be linked to the HJB equation.

In a nutshell, our approach is based on the following two main ideas. First,
by using the stochastic Perron method, we do not have to work with the value
function directly, but can in fact resort to a sufficiently rich class of
approximating functions from above as long as their pointwise infimum is a
viscosity subsolution of the HJB equation. In what follows, this class of
approximating functions is chosen to be the set of value functions with
piecewise constant controls on a given time grid. The advantage of choosing
these approximating functions is that it is relatively easy to show
that they admit optimal controls in feedback form that are stable under
pasting, which allows us to mitigate the problem of not being able to paste
\emph{arbitrary} controls.

Nevertheless, working with piecewise constant controls in our setting is still
non-trivial as, in the strong formulation, the noise and filtration are still
control-dependent. However, since the cost functional only depends on the
distribution of the underlying noise and there are optimal controls in feedback
form, we can first study an auxiliary control problem with a fixed Brownian
motion with respect to a fixed filtration to construct optimizers and then
replace the driving Brownian motion and filtration with the appropriate
control-dependent innovations process and filtration.

\subsubsection{Piecewise constant controls}

The control problem with piecewise constant controls is formulated with respect
to the original Brownian motion $W$ on our probability space $(\Omega,\mfF,\P)$
and with respect to the filtration $\cF^W$ generated by $W$ and augmented by the
$\P$-nullsets.

For each $n\in\N$ let $\delta_n := T2^{-n}$ be the dyadic step size of
order $n$ and define the associated time and space-time grid
\[
  \T^n := \{k\delta_n : k=0,\dots,2^n\}
  \quad\text{and}\quad
  \S^n := \T^n\times\R^{\tilde{m}}.
\]
With this, the set of piecewise constant controls is given by
\begin{multline*}
  \cA^n := \bigl\{u : [0,T]\times\Omega\to\cU :
    u\text{ is }\cF^W\text{-progressively measurable and}\\
    \text{ constant on }
    ((k-1)\delta_n,k\delta_n]\text{ for all }k=1,\dots,2^n\bigr\}.
\end{multline*}
Observe that $\cA^n\subset\cA^{pre}$, but in general $\cA^n\not\subseteq\cA$
since we assume the piecewise constant controls to be $\cF^W$-progressive.
In any case, for $u\in\cA^n$ and $(t,x)\in\S$, the associated state process 
$\hat{X}^{u;t,x}=(\hat{A}^{u;t,x},\hat{\Upsilon}^{u;t,x},
\hat{\Gamma}^{u;t,x})$ given as the unique strong solution of
\begin{equation}\label{eq:pw-constant-state}
  \rd\hat{X}^{u;t,x}_s = f(s,\hat{X}^{u;t,x}_s,u_s)\rd s
    +\Sigma(s,\hat{X}^{u;t,x}_s,u_s)\rd W_s,\quad \hat{X}^{u;t,x}_t=x
\end{equation}
with driving noise $W$ is well-defined. With this, the cost functional for the
piecewise constant control problem is defined as
\[
 \hat{\cJ}(u;t,x):=\E\Bigl[\int_t^T \tilde{k}(s,\hat{X}^{u;t,x}_s,u_s)\rd s
   +\tilde{g}(\hat{X}^{u;t,x}_T)\Bigr]\qquad 
	\text{subject to }\cref{eq:pw-constant-state}
\]
with associated value function $\hat{V}^n:\S\to\R$ given by
\[
  \hat{V}^n(t,x) := \inf_{u\in\cA^n} \hat{\cJ}(u;t,x),\qquad (t,x)\in\S.
\]
By Theorem 3.2.2 in \cite{krylov2008controlled}, for each $t\in[0,T]$ fixed
the mapping $x \mapsto \hat{\cJ}(u;t,x)$ is continuous, uniformly with respect
to $u\in\cA^n$, implying that also $x\mapsto\hat{V}^n(t,x)$ is continuous. With
this and using the pseudo-Markov property for piecewise constant controls
established in \cite[Lemma 3.2.14]{krylov2008controlled} (see also
\cite{claisse2016pseudo} for a discussion of the importance of the
pseudo-Markov property), it follows from classical arguments that the piecewise
constant control problem satisfies the following version of the DPP,
see \Cref{subsec:dpp} for the proof.

\begin{proposition}\label{prop:dpp}
Let $(t,x)\in\S^n$ with $t<T$, and for each $u\in\cU$ denote by $\hat{X}^{u;t,x}$
the state process with constant control $u$. Then it holds that
\begin{equation}\label{eq:dpp}
  \hat{V}^n(t,x) = \inf_{u\in\cU}\E\Bigl[\int_t^{t+\delta_n}
    \tilde{k}(s,\hat{X}^{u;t,x}_s,u)\rd s
    + \hat{V}^n(t+\delta_n,\hat{X}^{u;t,x}_{t+\delta_n})\Bigr].
\end{equation}
\end{proposition}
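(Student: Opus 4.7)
The plan is to prove the two inequalities of \cref{eq:dpp} separately, relying crucially on the pseudo-Markov property \cite[Lemma 3.2.14]{krylov2008controlled} for piecewise constant controls and the uniform-in-control continuity of $\hat{\cJ}(u;t+\delta_n,\argdot)$ from \cite[Theorem 3.2.2]{krylov2008controlled}. Throughout, since $t\in\T^n$, any $u\in\cA^n$ is constant on $(t,t+\delta_n]$, and that constant $\eta$ is $\cF^W_t$-measurable by progressive measurability and right-continuity of $\cF^W$.

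For the inequality ``$\geq$'', I fix $u\in\cA^n$ and denote $\eta=u_{(t,t+\delta_n]}$. Conditioning the tail of the cost on $\cF^W_{t+\delta_n}$ and applying the pseudo-Markov property (which is valid because controls in $\cA^n$ restricted to $[t+\delta_n,T]$ are again piecewise constant on the subgrid) gives
\[
  \E\Bigl[\int_{t+\delta_n}^T\tilde k(s,\hat X_s^{u;t,x},u_s)\rd s
         +\tilde g(\hat X_T^{u;t,x})\Bigm|\cF^W_{t+\delta_n}\Bigr]
  \;\geq\;\hat V^n\bigl(t+\delta_n,\hat X_{t+\delta_n}^{u;t,x}\bigr).
\]
Taking total expectation and then conditioning on $\eta$ (using that $\eta$ is $\cF^W_t$-measurable while $(W_s-W_t)_{s\in[t,t+\delta_n]}$ is independent of $\cF^W_t$, and the initial state is deterministic), the inner expectation factors through the law $\mu_\eta$ of $\eta$, and one bounds below by the pointwise infimum over $\cU$, yielding the RHS of \cref{eq:dpp}.

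For the inequality ``$\leq$'', I fix $u_0\in\cU$ and $\varepsilon>0$ and construct a near-optimal pasted control. Since $y\mapsto\hat{\cJ}(v;t+\delta_n,y)$ is continuous uniformly in $v\in\cA^n$ by \cite[Theorem 3.2.2]{krylov2008controlled}, and $\hat V^n(t+\delta_n,\argdot)$ is continuous, I cover $\R^{\tilde m}$ by a countable Borel partition $\{B_i\}$ of sufficiently small diameter (with respect to the modulus of continuity up to polynomial growth, localized where needed), pick $y_i\in B_i$ and a control $v^{(i)}\in\cA^n$ with $\hat{\cJ}(v^{(i)};t+\delta_n,y_i)\leq\hat V^n(t+\delta_n,y_i)+\varepsilon/2$, and define the selector $v^{y}:=v^{(i)}$ on $B_i$. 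By uniform continuity, $\hat{\cJ}(v^y;t+\delta_n,y)\leq\hat V^n(t+\delta_n,y)+\varepsilon$ for all $y$. I then set
\[
  u_s:=u_0\,\mathds{1}_{(t,t+\delta_n]}(s)
       +v^{\hat X_{t+\delta_n}^{u_0;t,x}}_s\mathds{1}_{(t+\delta_n,T]}(s),
\]
which lies in $\cA^n$: the piece on $(t,t+\delta_n]$ is deterministic; on $(t+\delta_n+(k-1)\delta_n,t+\delta_n+k\delta_n]$ the value is a measurable function of the $\cF^W_{t+\delta_n+(k-1)\delta_n}$-measurable pair $(\hat X_{t+\delta_n}^{u_0;t,x},W\text{-increments on }[t+\delta_n,t+\delta_n+(k-1)\delta_n])$. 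Applying the tower property together with the pseudo-Markov property in reverse gives $\hat{\cJ}(u;t,x)$ bounded above by the bracketed RHS at $u_0$ plus $\varepsilon$; taking the infimum over $u_0$ and sending $\varepsilon\to0$ yields the inequality.

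The main obstacle is the ``$\leq$'' direction: ensuring the measurable selection produces a pasted control that is genuinely $\cF^W$-progressive and piecewise constant on the dyadic grid, while simultaneously propagating the $\varepsilon$-bound uniformly in the (random) state at time $t+\delta_n$. The polynomial growth of the cost forces one to control the tails of the distribution of $\hat X_{t+\delta_n}^{u_0;t,x}$ (using standard moment estimates for SDEs with Lipschitz coefficients) before invoking the partition argument, which is the one technical point where the assumptions of \Cref{asspt-coeff,asspt-cost} are used together with the Krylov continuity result to close the estimate.
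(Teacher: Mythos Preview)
Your proposal is correct and follows essentially the same strategy as the paper: both directions rest on the pseudo-Markov property \cite[Lemma~3.2.14]{krylov2008controlled}, and the ``$\leq$'' direction is obtained by a countable Borel partition of $\R^{\tilde m}$ together with the uniform-in-control continuity of $\hat\cJ(\,\cdot\,;t+\delta_n,\,\cdot\,)$ from \cite[Theorem~3.2.2]{krylov2008controlled}, pasting $\varepsilon$-optimal tails onto the constant first step. Two small remarks: (i) your explicit conditioning on the $\cF^W_t$-measurable first-step value $\eta$ in the ``$\geq$'' direction is a welcome clarification, since the paper's Step~1 only treats deterministic first steps and leaves the reduction implicit; (ii) the tail/moment control you flag as the ``main obstacle'' is in fact unnecessary---the paper sidesteps it by choosing the radius $\rho_y$ locally at each center $y$ (so that $|\hat\cJ(u;t+\delta_n,y)-\hat\cJ(u;t+\delta_n,\hat y)|+|\hat V^n(t+\delta_n,y)-\hat V^n(t+\delta_n,\hat y)|\leq\varepsilon/3$ on $B_{\rho_y}(y)$), which yields the countable cover without ever invoking a global modulus of continuity or moment bounds on $\hat X^{u_0;t,x}_{t+\delta_n}$.
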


The advantage of the DPP is that it
gives us a convenient way to construct optimal piecewise constant controls.

\begin{theorem}\label{th:DPP-aux-OC}
For each $n\in\N$, there exists a measurable function $U^*_n:\S\to\cU$ such
that for each $(t,x)\in\S$ the SDE
\[
  \rd\hat{X}^{*;t,x}_s
    = f\bigl(s,\hat{X}^{*;t,x}_s,U^*_n(s,\hat{X}^{*;t,x}_s)\bigr)\rd s
    + \Sigma\bigl(s,\hat{X}^{*;t,x}_s,U^*_n(s,\hat{X}^{*;t,x}_s)\bigr)\rd W_s
\]
with $\hat{X}^{*;t,x}_s = x$ for all $s\in[0,t]$ admits a unique strong solution and such that the control process
\[
  u^*_s := U^*_n(s,\hat{X}^{*;t,x}_s),\quad s\in[0,T]
\]
is admissible and optimal for the piecewise constant control problem, that is
\[
  u^*\in\cA^n\quad\text{and}\quad \hat{\cJ}^n(u^*;t,x) = \hat{V}^n(t,x).
\]
\end{theorem}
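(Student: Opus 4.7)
The plan is a backward-inductive construction on the dyadic grid $\T^n$ combined with a measurable-selection argument applied to the DPP of \Cref{prop:dpp}. Writing $t_k:=k\delta_n$, for each $k=1,\dots,2^n$ introduce the one-step cost
$$H_k(x,u):=\E\Bigl[\int_{t_{k-1}}^{t_k}\tilde{k}\bigl(s,\hat{X}^{u;t_{k-1},x}_s,u\bigr)\rd s+\hat{V}^n\bigl(t_k,\hat{X}^{u;t_{k-1},x}_{t_k}\bigr)\Bigr],\qquad(x,u)\in\R^{\tilde{m}}\times\cU,$$
where the state $\hat{X}^{u;t_{k-1},x}$ is driven by the constant control $u\in\cU$ on $[t_{k-1},t_k]$. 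The first key point is that $H_k$ is jointly continuous: under \Cref{asspt-coeff} and \Cref{lem:G-is-Lip}, the coefficients $f(\argdot,u),\Sigma(\argdot,u)$ are Lipschitz with uniform-in-$u$ constants, so standard SDE stability gives continuity of $(x,u)\mapsto\hat{X}^{u;t_{k-1},x}$ in $L^2$ with polynomial-in-$|x|$ moment bounds uniform in $u$; combined with joint continuity of $\tilde{k}$, continuity of $x\mapsto\hat{V}^n(t_k,x)$ recalled before \Cref{prop:dpp}, and the polynomial growth in \Cref{asspt-cost}, dominated convergence yields continuity of $H_k$.

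Since $\cU$ is compact, the argmin correspondence $\Psi_k(x):=\operatorname{argmin}_{u\in\cU}H_k(x,u)$ is non-empty and compact-valued and, by Berge's theorem, upper hemicontinuous. The Kuratowski--Ryll-Nardzewski theorem then supplies a Borel-measurable selector $\psi_k:\R^{\tilde{m}}\to\cU$ with $\psi_k(x)\in\Psi_k(x)$ for all $x$, and \Cref{prop:dpp} reduces to $H_k(x,\psi_k(x))=\hat{V}^n(t_{k-1},x)$. Define the feedback $U^*_n:\S\to\cU$ by $U^*_n(s,x):=\psi_{k(s)}(x)$, where $k(s)$ is the unique index with $s\in(t_{k(s)-1},t_{k(s)}]$, extended measurably at the grid points.

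Now construct the process iteratively: starting from $\hat{X}^{*;t,x}_t=x$, on each interval $(t_{k-1},t_k]$ set the $\cF^W_{t_{k-1}}$-measurable constant value $\bar{u}_k:=\psi_k(\hat{X}^{*;t,x}_{t_{k-1}})$ and solve \cref{eq:pw-constant-state} with this constant coefficient; \Cref{asspt-coeff} and \Cref{lem:G-is-Lip} guarantee a unique $\cF^W$-adapted strong solution on each subinterval, and concatenation yields the pathwise unique strong solution $\hat{X}^{*;t,x}$ on $[t,T]$. The control $u^*_s:=\bar{u}_{k(s)}$ coincides with $U^*_n(s,\hat{X}^{*;t,x}_s)$ under the convention that the feedback is evaluated at the state at the left endpoint of the current grid interval; it is piecewise constant on $\T^n$ and $\cF^W$-progressive, hence $u^*\in\cA^n$. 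Optimality follows by backward induction: letting $J_k:=\E\bigl[\int_{t_k}^T\tilde{k}(s,\hat{X}^{*;t,x}_s,u^*_s)\rd s+\tilde{g}(\hat{X}^{*;t,x}_T)\bigr]$, the pseudo-Markov property used in \Cref{prop:dpp} together with the defining property of $\psi_k$ gives
$$J_{k-1}=\E\bigl[H_k\bigl(\hat{X}^{*;t,x}_{t_{k-1}},\psi_k(\hat{X}^{*;t,x}_{t_{k-1}})\bigr)\bigr]=\E\bigl[\hat{V}^n\bigl(t_{k-1},\hat{X}^{*;t,x}_{t_{k-1}}\bigr)\bigr],$$
and iterating from $k=2^n$ down to $k=1$, with terminal condition $\hat{V}^n(T,\argdot)=\tilde{g}$, yields $\hat{\cJ}(u^*;t,x)=J_0=\hat{V}^n(t,x)$.

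The main obstacle I anticipate is the joint continuity of $H_k$ in the first step: while continuity in $x$ of $\hat{V}^n(t_k,\argdot)$ is supplied by \cite[Thm.~3.2.2]{krylov2008controlled}, the degeneracy of $\Sigma$ and the polynomial growth of $\tilde{k},\tilde{g}$ require uniform-in-$u$ moment bounds on $\hat{X}^{u;t_{k-1},x}$ to legitimise passage to the limit under the expectation. Once joint continuity of $H_k$ is secured, the measurable selection and backward pasting are standard, and no further nontrivial difficulty arises.
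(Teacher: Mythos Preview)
Your proposal is correct and follows essentially the same approach as the paper: construct the feedback by applying measurable selection to the one-step DPP minimization of \Cref{prop:dpp}, extend piecewise constant in time, and conclude optimality from the DPP. The paper's proof is considerably more terse---it invokes \cite[Corollary~3.2.8]{krylov2008controlled} directly for the joint continuity of $(x,u)\mapsto H_k(x,u)$ (which addresses precisely the obstacle you flag) and \cite[Theorem~2]{schal1974selection} for the selector, then dispatches the iterative construction and backward induction with a ``clearly''; your Berge/Kuratowski--Ryll-Nardzewski route and explicit backward induction are equivalent and simply fill in those details.
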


\begin{proof}
It is sufficient to construct $U^*_n$ on $\S^n$ and extend it as a piecewise
constant function to $\S$. In particular, this guarantees that the control
$u^*$ is indeed piecewise constant. Let therefore $t\in\T^n$ with $t<T$.
According to Corollary 3.2.8 in \cite{krylov2008controlled}, the mapping
\[
  (x,u) \mapsto \E\Bigl[\int_t^{t+\delta_n}
    \tilde{k}(s,\hat{X}^{u;t,x}_s,u)\rd s
    + \hat{V}^n(t+\delta_n,\hat{X}^{u;t,x}_{t+\delta_n})\Bigr]
\]
is continuous on $\R^{\tilde{m}}\times\cU$. We may therefore apply the
measurable selection result \cite[Theorem 2]{schal1974selection} to obtain
a measurable optimizer $U^*_n:\S^n\to\cU$ of the right-hand side of the DPP
\cref{eq:dpp}. Clearly, this function satisfies the desired properties.
\end{proof}

With our hands on an optimal feedback control for the piecewise constant
control problem, we can now draw the connection to the full information problem
in the strong formulation.

\begin{proposition}\label{prop:cost-relation}
Let $n\in\N$ and for $(t,x)\in\S$ let $u^*\in\cA^n$ be the optimal control for
$\hat{V}^n(t,x)$ constructed in \Cref{th:DPP-aux-OC}. Then $u^*\in\cA$ and
\[
  V(t,x) \leq \cJ(u^*;t,x) = \hat{\cJ}^n(u^*;t,x) = \hat{V}^n(t,x).
\]
\end{proposition}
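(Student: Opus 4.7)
The plan is to interpret $u^{*}$ by applying the measurable feedback $U_n^{*}$ from \Cref{th:DPP-aux-OC} directly in the original problem, yielding a control admissible for $\cA$, and then to invoke weak uniqueness of the state SDE to identify the two cost functionals. The crucial observation is that \Cref{th:DPP-aux-OC} delivers a Markov feedback $U_n^{*}$ whose \emph{formula} is independent of whether the driving noise is $W$ or the innovations process $V^{u^{*}}$.

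To verify $u^{*}\in\cA$, I would construct $Y^{u^{*}}$ together with the auxiliary states $\Upsilon^{u^{*}},\Gamma^{u^{*}}$ recursively over the grid $\T^{n}$. Assuming inductively that $X^{u^{*}}_{k\delta_n}$ is $\cY^{u^{*}}_{k\delta_n}$-measurable, setting $u^{*}_s := U_n^{*}(k\delta_n,X^{u^{*}}_{k\delta_n})$ for $s\in(k\delta_n,(k+1)\delta_n]$ gives a constant-on-cell, $\cY^{u^{*}}_{k\delta_n}$-measurable random control. On the cell, the original SDE \cref{eq:state-proc} for $Y^{u^{*}}$ reduces to a Lipschitz SDE driven by $W$ with frozen control, so it admits a unique strong solution. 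Extending $\Upsilon^{u^{*}},\Gamma^{u^{*}}$ across the cell via their defining integrals preserves $\cY^{u^{*}}$-adaptedness (the innovations process is $\cY^{u^{*}}$-adapted by \Cref{rem:connection-aug-orig}), which closes the induction and yields $u^{*}\in\cA$.

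For the cost equality, I would exploit weak uniqueness of the full-information SDE \cref{eq:aug-state}. In the innovations formulation, $X^{u^{*};t,x}$ satisfies an SDE with coefficients $f,\Sigma$ driven by the $(\cY^{u^{*}},\P)$-Brownian motion $V^{u^{*}}$, under the same piecewise-constant Markov feedback $U_n^{*}$; the process $\hat X^{u^{*};t,x}$ from \Cref{th:DPP-aux-OC} satisfies the same SDE but driven by $W$. Iterating uniqueness-in-law for constant-control Lipschitz SDEs across grid cells, the joint distributions of $(X^{u^{*};t,x},u^{*})$ and $(\hat X^{u^{*};t,x},u^{*})$ coincide. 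Since $\tilde k,\tilde g$ are continuous with polynomial growth (\Cref{asspt-cost}) and standard moment estimates on $X$ guarantee integrability, $\cJ(u^{*};t,x)=\hat{\cJ}^{n}(u^{*};t,x)$. The definition of $V$ as an infimum over $\cA$ and the conclusion of \Cref{th:DPP-aux-OC} then complete the chain of (in)equalities.

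The main obstacle I anticipate is that the two natural realizations of $u^{*}$---one built from the $W$-driven state $\hat X^{*}$, one from the original-problem state $X^{u^{*}}$---differ pathwise, and only the second is manifestly $\cY^{u^{*}}$-adapted. Reconciling them relies on the piecewise-constant-in-time structure of $U_n^{*}$, which freezes the closed loop on each grid cell and reduces the construction to classical constant-control Lipschitz SDEs; this is precisely what enables the weak-uniqueness argument to close without circular reasoning between $u^{*}$ and its driving innovations process.
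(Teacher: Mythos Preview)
Your approach is correct and proves what is needed downstream, namely $V(t,x)\leq\hat V^n(t,x)$. It differs from the paper's proof in one respect worth noting. The paper takes the literal process $u^*_s=U_n^*(s,\hat X^{*;t,x}_s)$ from \Cref{th:DPP-aux-OC} (built on the $W$-driven state), observes that $u^*$ is $\cF^{\hat X}$-progressive with $\cF^{\hat X}=\cF^{\hat A}$, and then asserts $u^*\in\cA$; the cost equality is dispatched with the remark that the functional depends on the driving Brownian motion only through its law. You instead re-apply the feedback $U_n^*$ to the innovations-driven state, build the admissible control recursively over the grid, and match the costs via weak uniqueness cell by cell. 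As you correctly flag in your final paragraph, the two realizations differ pathwise and only yours is manifestly $\cY^{u^*}$-adapted; your route therefore sidesteps the question of why $\cF^{\hat A}$-progressivity (a statement about a $W$-driven process) should yield $\cY^{u^*}$-progressivity (where $\cY^{u^*}$ is generated by the $\lambda$-dependent $Y^{u^*}$). The paper's argument is terser and leaves that identification implicit; yours makes the construction watertight at the price of producing a control that is equal in law to, but not pathwise identical with, the one named in the statement.
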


\begin{proof}
Let us first observe that $u^*\in\cA^{pre}$ and denote by $\hat{X}
=(\hat{A},\hat{\Upsilon},\hat{\Gamma})$ the state process associated with
$u^*$. Since $u^*$ is given in terms of a measurable function of $\hat{X}$, it
follows that $u^*$ is $\cF^{\hat{X}}$-progressive. But
$\cF^{\hat{X}}=\cF^{\hat{A}}$ and hence $u^*\in\cA$. Finally, since the
cost functional depends on the underlying Brownian motion only through its
distribution, it follows that $\cJ(u^*;t,x) = \hat{\cJ}^n(u^*;t,x)$ from which
we conclude.
\end{proof}

\subsubsection{Convergence of the value functions}

Up to this point, we have solved the piecewise constant control
problem and argued that the constructed optimizer induces an admissible control
in the full information problem in the strong formulation. It remains to argue
that the value functions $\hat{V}^n$ converge to a viscosity subsolution of the
HJB equation. This, however, is a standard argument since the DPP induces a
monotone, consistent, and stable approximation scheme in the sense of
\cite{barles1991convergence}.

To make this precise, let us fix $n\in\N$ and subsequently write $\mfM(\S^n)$
for the space of real-valued measurable functions on $\S^n$. We
introduce the approximation scheme at level $n$ in terms of a mapping
$S(n,\argdot):\S^n\times\R\times\mfM(\S^n)\to\R$ given by
\[
  S(n,t,x,v,w) := v - \inf_{u\in\cU}\E\Bigl[\int_t^{t+\delta_n}
    \tilde{k}(s,\hat{X}^{u;t,x}_s,u)\rd s
    + w(t+\delta_n,\hat{X}^{u;t,x}_{t+\delta_n})\Bigr],
    \quad t<T
\]
and
\[
 S(n,T,x,v,w) := v - \tilde{g}(x).
\]
Observe that the restriction of the piecewise constant value function
$\hat{V}^n$ to $\S^n$ solves this scheme in the sense that
\begin{equation}\label{eq:solution-to-scheme}
    S\bigl(n,t,x,\hat{V}^n(t,x),\hat{V}^n\bigr) = 0,\quad (t,x)\in\S^n.
\end{equation}
Following Example 2 in \cite{barles1991convergence}, the scheme $S$ is
monotone, consistent, and stable and hence the relaxed limit $V^+:\S\to\R$ of
the value functions $\hat{V}^n$, $n\in\N$, given by
\[
  V^+(t,x):=\limsup_{\substack{\S^n\ni(s,y)\to(t,x)\\ n\to\infty}}
    \hat{V}^n(s,y)
\]
is an upper semicontinuous function and a viscosity subsolution
of the HJB equation by Theorem 2.1 in \cite{barles1991convergence}. Note
that $V^+\geq V$ since $\hat{V}^n\geq V$ for all $n\in\N$. We gather these
results in the following theorem.

\begin{theorem}
The relaxed limit $V^+:\S\to\R$ of the piecewise constant value functions
$\hat{V}^n$, $n\in\N$, is an upper semicontinuous viscosity subsolution of the
HJB equation satisfying $V^+(T,\argdot)=\tilde{g}$ and $V^+\geq V$.
Moreover, there exist $C,q>0$ such that
\[
  0\leq V^+(t,x)\leq C(1+|x|^q)\qquad \forall (t,x)\in\S.
\]
\end{theorem}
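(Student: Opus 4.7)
The plan is to invoke the Barles--Souganidis stability theorem \cite[Theorem 2.1]{barles1991convergence} to obtain the upper semicontinuity and viscosity subsolution property of $V^+$ simultaneously, then address the terminal condition, the pointwise inequality $V^+\geq V$, and the polynomial bound separately. Since the paragraph preceding the theorem already asserts that the scheme $S$ is monotone, consistent, and stable, my first task would be to verify each of these three properties. Monotonicity is immediate from the definition: if $w_1\leq w_2$ pointwise on $\S^n$, then $S(n,t,x,v,w_1)\geq S(n,t,x,v,w_2)$. Consistency is an Itô expansion uniform in $u\in\cU$: applying Itô's formula to a smooth test function $\varphi(t+\delta_n,\hat{X}^{u;t,x}_{t+\delta_n})$, dividing by $\delta_n$ and sending $n\to\infty$ recovers the symbol $-\partial_t\varphi-\cL^u\varphi-\tilde{k}(\argdot,u)$, with uniformity following from compactness of $\cU$ together with joint continuity of $f$, $\Sigma$ and $\tilde{k}$.

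For stability and for the polynomial upper bound on $V^+$, I would use that $G$, $b$ and $\sigma^{-1}$ are bounded by \Cref{asspt-mu} and \Cref{asspt-coeff}, making $f$ and $\Sigma$ in \cref{eq:pw-constant-state} uniformly bounded. Standard moment estimates for SDEs with bounded coefficients then give
\[
\E\Bigl[\sup_{s\in[t,T]}\bigl|\hat{X}^{u;t,x}_s\bigr|^p\Bigr]\leq C_p(1+|x|^p)
\]
uniformly in $u\in\cA^n$ and $n\in\N$. Combined with $|\tilde{k}|+|\tilde{g}|\leq C(1+|a|^p)\leq C(1+|x|^p)$, which is inherited from \Cref{asspt-cost} via the compact support of $\mu$, this yields $0\leq\hat{V}^n(t,x)\leq C(1+|x|^p)$ uniformly in $n$; the lower bound uses $k,g\geq 0$, hence $\tilde{k},\tilde{g}\geq 0$. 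Taking the relaxed limsup preserves both bounds and delivers the required estimate for $V^+$. With stability in hand, and since $\hat{V}^n$ solves $S=0$ on $\S^n$ by \cref{eq:solution-to-scheme}, \cite[Theorem 2.1]{barles1991convergence} gives upper semicontinuity of $V^+$ and its viscosity subsolution property for \cref{eq:HJB}. The pointwise inequality $V^+\geq V$ is immediate from \Cref{prop:cost-relation}, which provides $\hat{V}^n\geq V$, combined with the fact that pointwise inequalities are preserved under the relaxed limsup.

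For the terminal condition $V^+(T,\argdot)=\tilde{g}$, one inequality follows directly from the scheme: $S(n,T,x,v,\argdot)=v-\tilde{g}(x)$ forces $\hat{V}^n(T,x)=\tilde{g}(x)$ for all $n$, so evaluating the relaxed limsup along the constant sequence $(T,x)\in\S^n$ yields $V^+(T,x)\geq\tilde{g}(x)$. For the converse, I would fix any constant $u_0\in\cU$ and, for an arbitrary sequence $(s_k,y_k)\to(T,x)$ with $s_k\in\T^{n_k}$, estimate
\[
\hat{V}^{n_k}(s_k,y_k)\leq\E\Bigl[\int_{s_k}^T\tilde{k}(r,\hat{X}^{u_0;s_k,y_k}_r,u_0)\rd r+\tilde{g}(\hat{X}^{u_0;s_k,y_k}_T)\Bigr].
\]
The integral vanishes as $T-s_k\to 0$ because of polynomial growth of $\tilde{k}$ together with the uniform moment bound; the terminal term converges to $\tilde{g}(x)$ using continuity of $\tilde{g}$ and continuity of the SDE flow in its initial data.

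The main technical obstacle throughout is uniformity in $u\in\cU$, particularly in the consistency verification and in the moment bounds feeding into stability. Compactness of $\cU$ and the joint continuity hypotheses in \Cref{asspt-coeff,asspt-cost} are precisely what make these uniform estimates work, so the heavy lifting reduces to careful bookkeeping of constants rather than any genuinely novel analysis; everything else is a direct consequence of uniform boundedness of the coefficients of \cref{eq:pw-constant-state}.
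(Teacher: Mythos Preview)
Your proposal is correct and follows essentially the same route as the paper: the paper itself offers no detailed proof beyond invoking Example~2 and Theorem~2.1 of \cite{barles1991convergence} for the scheme properties and subsolution conclusion, and citing \Cref{prop:cost-relation} for $V^+\geq V$. You have simply filled in the details the paper leaves implicit, in particular the uniform moment bound feeding into stability and the polynomial estimate, and the two-sided argument for the terminal condition $V^+(T,\argdot)=\tilde{g}$, which the paper states without justification.
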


\subsection{The supremum of subsolutions}\label{subsec:supremum-subsolutions}

It remains to show that $V^{weak}$ is bounded from below by a viscosity
supersolution of the HJB equation.
In order to achieve this, we rely on the notion of stochastic subsolutions
associated with the weak control problem as formulated in \Cref{sec:Weak-OP}.
These stochastic subsolutions are constructed in a way which guarantees that
they are dominated by the value function $V^{weak}$, and their pointwise
maximum is a viscosity supersolution of the HJB equation. Since the arguments
leading to these results are standard and follow \cite{bayraktar2013stochastic}
very closely, we keep the exposition to a minimum.

\begin{definition}\label{def:Stoch-Sub}
The set of stochastic subsolutions of \eqref{eq:HJB}, denoted by $\cV^-$, is
the set of all lower semicontinuous functions $W:\S\to\R$ such that
\begin{itemize}
\item[(1)] there exist constants $C,q>0$ such that
\[
  W(T,x)\leq\tilde{g}(x)\quad\text{and}\quad 0\leq W(t,x)\leq
    C(1+|x|^q)\qquad\forall (t,x)\in\S;
\]
\item[(2)] for all $(t,x)\in\S$, any weak admissible control
$U^{t,x}\in\cA^{weak}$, and any pair of $\cF^{t,x}$-stopping times
$t\leq\tau\leq\rho\leq T$, we have
\[
  W(\tau,X^{t,x}_\tau) \leq \E^{t,x}\Bigl[\int_\tau^\rho\tilde{k}(s,X_s^{t,x},u^{t,x}_s)\rd s
  +W(\rho,X_\rho^{t,x})\Big|\cF^{t,x}_\tau\Bigr].
\]
\end{itemize}
\end{definition}

Since the function $W\equiv 0$ is clearly a stochastic subsolution, we see that
$\cV^-\neq\emptyset$. Moreover, the submartingale property and the terminal
inequality directly show that
\[
 W(t,x) \leq \E^{t,x}\Bigl[\int_t^T\tilde{k}(s,X_s^{t,x},u^{t,x}_s)\rd s
    +\tilde{g}(X_T^{t,x})\Bigr]
\]
for any weak control and hence $W\leq V^{weak}$. In particular, it follows that
the pointwise supremum $V^-$ of all stochastic subsolutions
\[
 V^{-}(t,x):=\sup_{W\in\cV^{-}}W(t,x),
\]
is dominated by $V^{weak}$ and hence finite. Finally, as in
\cite[Theorem 4.1]{bayraktar2013stochastic}, we obtain the following key result, the proof of which is given in \Cref{app:supremum-stochastic-subsolutions}.

\begin{theorem}\label{th:supremum-is-supersolution}
The supremum $V^-$ of the set of stochastic subsolutions is a lower
semicontinuous viscosity supersolution of the HJB equation
satisfying $V^-(T,\argdot) = \tilde{g}$ and $V^-\leq V^{weak}$.
\end{theorem}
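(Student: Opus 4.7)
The statement decomposes into four claims: the bound $V^-\leq V^{weak}$, lower semicontinuity of $V^-$ together with $V^-\in\cV^-$, the terminal identity $V^-(T,\cdot)=\tilde g$, and the viscosity supersolution property on $[0,T)\times\R^{\tilde m}$. The first is essentially noted in the paragraph preceding the theorem: applying property (2) of \Cref{def:Stoch-Sub} with $\tau=t$, $\rho=T$ and invoking $W(T,\cdot)\leq\tilde g$ yields $W\leq V^{weak}$ for every $W\in\cV^-$, so $V^-\leq V^{weak}$ and in particular $V^-$ is finite with polynomial growth.

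For lower semicontinuity my plan is first to show that $\cV^-$ is closed under finite pointwise maxima. Given $W_1,W_2\in\cV^-$ and a weak admissible control $U^{t,x}$, the submartingale inequality for $W_1$ on the $\cF^{t,x}_\tau$-measurable set $\{W_1(\tau,X^{t,x}_\tau)\geq W_2(\tau,X^{t,x}_\tau)\}$, combined symmetrically with that for $W_2$, delivers the submartingale inequality for $W_1\vee W_2$; since the maximum of two l.s.c.\ functions is l.s.c., $W_1\vee W_2\in\cV^-$. Taking running maxima of a countable maximizing family realizes $V^-$ as the pointwise limit of an increasing sequence in $\cV^-$; Fatou's lemma transfers property (2) to the limit, and monotone limits of l.s.c.\ functions are l.s.c., so $V^-\in\cV^-$. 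For the terminal identity the inequality $V^-(T,\cdot)\leq\tilde g$ is built into the definition, and for the reverse I would exhibit, for each $x_0$ and $\varepsilon>0$, a smooth element of $\cV^-$ with $\phi(T,x_0)\geq\tilde g(x_0)-\varepsilon$: start from a smooth polynomially growing lower approximant of $\tilde g$ and subtract a sufficiently large multiple of $T-t$ to enforce the classical HJB subsolution inequality on $\S$; once $\phi$ is a classical subsolution, It\^o's formula gives (2) directly.

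The main obstacle is the viscosity supersolution property, which I would prove by contradiction in the style of \cite{bayraktar2013stochastic}. Assume there exist $(t_0,x_0)\in[0,T)\times\R^{\tilde m}$ and $\phi\in C^{1,2}$ with $\phi(t_0,x_0)=V^-(t_0,x_0)$, $\phi<V^-$ on a punctured neighborhood, and
\[
\partial_t\phi(t_0,x_0)+\inf_{u\in\cU}\bigl\{\cL^u\phi(t_0,x_0)+\tilde k(t_0,x_0,u)\bigr\}>0.
\]
Continuity of the coefficients and compactness of $\cU$ yield $r,\eta>0$ such that on the cylinder $B:=[t_0,t_0+r]\times\overline{B}_r(x_0)$,
\[
\partial_t\phi+\cL^u\phi+\tilde k(\cdot,u)\geq\eta\qquad\text{for every }u\in\cU.
\]
Perturb to $\phi^\varepsilon:=\phi+\varepsilon\chi$ with a smooth bump $\chi\geq 0$ compactly supported in the interior of $B$ and $\chi(t_0,x_0)>0$, choosing $\varepsilon>0$ so small that the strict subsolution bound is preserved on $B$ and $\phi^\varepsilon\leq V^-$ on the parabolic boundary of $B$ (possible since $V^-$ is l.s.c.\ and strictly exceeds $\phi$ on the boundary of a small enough $B$). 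Define $W^\varepsilon:=V^-$ off $B$ and $W^\varepsilon:=\max(V^-,\phi^\varepsilon)$ on $B$; this is l.s.c.\ and strictly exceeds $V^-$ at $(t_0,x_0)$. The crux is verifying property (2): given a weak admissible control $U^{t,x}$ and stopping times $\tau\leq\rho$, split at the first exit $\theta$ of $(s,X^{t,x}_s)$ from $B$ after $\tau$, apply It\^o's formula to $\phi^\varepsilon$ on $[\tau,\rho\wedge\theta]$ using the strict classical subsolution bound, and paste with the submartingale property of $V^-$ on $[\rho\wedge\theta,\rho]$, which is valid because $W^\varepsilon=V^-$ at the exit point; this pasting is legitimate in the weak formulation since the shifted control from $\theta$ onwards is again weak admissible. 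The resulting $W^\varepsilon\in\cV^-$ strictly dominates $V^-$ at $(t_0,x_0)$, contradicting the maximality of $V^-$.
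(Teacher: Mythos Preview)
Your overall strategy---the stochastic Perron contradiction argument of \cite{bayraktar2013stochastic}---matches the paper's proof, and your explicit treatment of the lattice property of $\cV^-$ and the bound $V^-\leq V^{weak}$ is sound. Two points, however, need repair.

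First, a slip in the interior supersolution step: your cylinder $B=[t_0,t_0+r]\times\overline{B}_r(x_0)$ has $(t_0,x_0)$ on its boundary, so a bump $\chi$ compactly supported in the \emph{interior} of $B$ must vanish at $(t_0,x_0)$, which kills the lift $\phi^\varepsilon(t_0,x_0)>V^-(t_0,x_0)$ you need for the contradiction. The fix is simply to use a space--time ball centred at $(t_0,x_0)$. The paper sidesteps bump functions entirely: it sets $\varphi^\eta:=\varphi+\eta$ for a small constant $\eta$ and uses strictness of the minimum to obtain $V^-\geq\varphi+\kappa>\varphi^\eta$ on an annulus $\overline{B_\varepsilon}\setminus B_{\varepsilon/2}$, so that $W^\eta:=V^-\vee\varphi^\eta$ equals $V^-$ outside $B_{\varepsilon/2}$. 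This makes the pasting argument cleaner and avoids tracking the derivatives of $\chi$.

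Second, and more substantively, your direct construction for the terminal identity does not go through as written. Elements of $\cV^-$ must be nonnegative (property~(1) of \Cref{def:Stoch-Sub}), and $\phi(t,x)=h(x)-M(T-t)$ will typically be negative. Moreover, if $h$ has genuine polynomial growth then $\cL^u h$ is unbounded in $x$ (the coefficients $f,\Sigma$ are bounded but the derivatives of $h$ are not), so no constant $M$ enforces $\partial_t\phi+\cL^u\phi+\tilde k\geq 0$ on all of $\S$. The paper instead proves $V^-(T,\cdot)\geq\tilde g$ by a \emph{localized} contradiction exactly mirroring the interior step: assuming $V^-(T,\bar x)<\tilde g(\bar x)$, it builds a local quadratic $\phi(t,x)=V^-(T,\bar x)-|x-\bar x|^2/\beta-M(T-t)$ on a ball around $(T,\bar x)$, checks $-\partial_t\phi-\inf_u\cL^u\phi<0$ there for $M$ large (now only a \emph{local} bound is needed), and sets $W^\eta:=V^-\vee\phi^\eta$ on the ball and $V^-$ elsewhere. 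Since $W^\eta\geq V^-\geq 0$ and the modification is compactly supported, both the nonnegativity and the growth obstruction disappear, and the same It\^o-plus-pasting argument yields $W^\eta\in\cV^-$ with $W^\eta(T,\bar x)>V^-(T,\bar x)$.
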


\subsection{Viscosity characterization and
            \texorpdfstring{$\boldsymbol\varepsilon$}{epsilon}-optimal controls}

It remains to piece together the results of the previous subsections to arrive
at the main result of this article. Up to this point, we have argued that
\[
  V^- \leq V^{weak} \leq V \leq V^+,
\]
and $V^-,V^+$ are, respectively, viscosity super- and subsolutions of the HJB
equation. Using the comparison principle, we therefore find that all functions
above are in fact equal, and we have constructed $\varepsilon$-optimal controls.

\begin{theorem}
It holds that $V^-=V^{weak}=V=V^+$ is the unique continuous viscosity solution
of the HJB equation in the class of nonnegative functions of polynomial growth
with terminal value $\tilde{g}$. Moreover, for each $\varepsilon>0$ and
$(t,x)\in\S$, there exists $n\in\N$ such that $\hat{V}^n(t,x) \leq V(t,x) +
\varepsilon$, and hence the optimal control associated with $\hat{V}^n(t,x)$ is
$\varepsilon$-optimal for $V(t,x)$.
\end{theorem}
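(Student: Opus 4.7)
The strategy is to sandwich $V$ between $V^-$ and $V^+$ via the comparison principle, and then to extract $\varepsilon$-optimal controls by upgrading Barles--Souganidis grid convergence to pointwise convergence of $\hat V^n$. I would first apply \Cref{th:comparison-principle} to the pair $V^+$ and $V^-$: the former is an upper semicontinuous viscosity subsolution of \cref{eq:HJB} of polynomial growth with terminal value $\tilde g$ by \Cref{subsec:infimum-supersolutions}, and the latter a lower semicontinuous viscosity supersolution with the same properties by \Cref{th:supremum-is-supersolution}. Comparison yields $V^+\le V^-$, and combined with the chain $V^-\le V^{weak}\le V\le V^+$ (from \Cref{th:supremum-is-supersolution}, \cref{eq:V-V-weak}, and \Cref{prop:cost-relation}) all four quantities coincide. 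The common value is both upper and lower semicontinuous, hence continuous, and uniqueness within the nonnegative polynomial-growth class with terminal value $\tilde g$ is a direct second application of \Cref{th:comparison-principle} to any two candidate solutions.

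For the second assertion I would show $\hat V^n(t,x)\to V(t,x)$ pointwise. Introduce the lower relaxed limit
\[
  V_-(t,x):=\liminf_{\substack{\S^n\ni(s,y)\to(t,x)\\ n\to\infty}}\hat V^n(s,y),
\]
which by \cite{barles1991convergence} is a lower semicontinuous viscosity supersolution of \cref{eq:HJB} of polynomial growth with $V_-(T,\argdot)=\tilde g$ (since $\hat V^n(T,\argdot)=\tilde g$ by construction of the scheme). Applying comparison once more, now to $V^+$ and $V_-$, gives $V^+\le V_-$, and since trivially $V_-\le V^+$ by definition we obtain $V_-=V^+=V$. In particular $\hat V^n(t_n,x_n)\to V(t,x)$ along any sequence $(t_n,x_n)\in\S^n$ with $(t_n,x_n)\to(t,x)$. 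To move from this grid convergence to pointwise convergence at arbitrary $(t,x)\in\S$, take $t^n:=\lfloor t/\delta_n\rfloor\delta_n\in\T^n$, so that $(t^n,x)\in\S^n$ and $t^n\to t$. Under \Cref{asspt-coeff} the transformed coefficients $f,\Sigma$ of \cref{eq:aug-state} are bounded and Lipschitz in the spatial variable with constants independent of $(t,u)$, and under \Cref{asspt-cost} the data $\tilde k,\tilde g$ are of polynomial growth and uniformly continuous in the state variable. Standard SDE moment estimates therefore supply a modulus of continuity for $(t,x)\mapsto\hat{\cJ}(u;t,x)$ uniform in both $u$ and $n$, and hence for $\hat V^n$ uniform in $n$; this yields $|\hat V^n(t,x)-\hat V^n(t^n,x)|\to 0$ and in turn $\hat V^n(t,x)\to V(t,x)$.

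Given $\varepsilon>0$ and $(t,x)\in\S$, the previous step furnishes $n\in\N$ with $\hat V^n(t,x)\le V(t,x)+\varepsilon$. \Cref{th:DPP-aux-OC} produces a feedback control $u^*$ attaining $\hat V^n(t,x)$, and \Cref{prop:cost-relation} shows $u^*\in\cA$ with $\cJ(u^*;t,x)=\hat V^n(t,x)\le V(t,x)+\varepsilon$; hence $u^*$ is $\varepsilon$-optimal. The main obstacle is the second step: Barles--Souganidis only delivers convergence along grid sequences, so extracting pointwise convergence at an arbitrary point of $\S$ demands equicontinuity of $\hat V^n$ uniform in $n$. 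This is not automatic since $\cA^n$ depends on $n$, but it follows from the fact that the bounds and Lipschitz constants in \Cref{asspt-coeff,asspt-cost} are independent of $u\in\cU$, precisely the uniformity that was highlighted when those assumptions were formulated.
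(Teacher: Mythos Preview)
Your argument is correct and matches the paper's approach. The sandwich $V^-\le V^{weak}\le V\le V^+$ followed by an application of \Cref{th:comparison-principle} to the pair $(V^+,V^-)$ is exactly what the paper does, and the use of \Cref{th:DPP-aux-OC} and \Cref{prop:cost-relation} to produce the $\varepsilon$-optimal control is also identical.

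Where you differ is in the level of detail for the second assertion. The paper simply writes that ``the existence of $\varepsilon$-optimal controls follows directly from the convergence $\hat V^n\to V^+=V$ and \Cref{prop:cost-relation}'', without distinguishing between convergence along grid sequences $(s,y)\in\S^n$ (which is all that the definition of $V^+$ and the Barles--Souganidis machinery deliver) and pointwise convergence of $\hat V^n(t,x)$ for an arbitrary $(t,x)\in\S$. You correctly identify this as a point requiring justification and supply it in two steps: first introducing the lower relaxed limit $V_-$ and invoking comparison once more to obtain $V_-=V^+=V$ (hence full locally uniform convergence along grid sequences, which is the standard Barles--Souganidis conclusion), and second using an equicontinuity estimate for $\hat V^n$ uniform in $n$ to transfer convergence from the nearby grid point $(t^n,x)$ to $(t,x)$ itself. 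Both steps are sound under the standing assumptions; the uniformity of the Lipschitz and boundedness constants in \Cref{asspt-coeff} is precisely what makes the equicontinuity modulus independent of $n$. Your treatment is therefore a careful elaboration of a detail the paper leaves implicit, not a different route.
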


\begin{proof}
We have $V^- \leq V^{weak} \leq V \leq V^+$ by construction.
Moreover, $V^-$ and $V^+$ are, respectively, lower and upper semicontinuous
viscosity super- and subsolutions of the HJB equation satisfying
$V^-(T,\argdot)=\tilde{g}=V^+(T,\argdot)$. The comparison principle hence
applies, showing that $V^+\leq V^-$, yielding the viscosity characterization.
The existence of $\varepsilon$-optimal controls follows directly from the
convergence $\hat{V}^n\to V^+=V$ and \Cref{prop:cost-relation}.
\end{proof}

\begin{remark}
The approach used in this paper in fact gives a general way to construct
$\varepsilon$-optimal controls for other control problems using stability
of viscosity solutions and the stochastic Perron method.
\end{remark}

\section{Application to a robotics control problem}\label{sec:Appl}

In this section, we present a toy application of our control methodology to a
simple robotics control problem in a wind tunnel. The primary objective of this
problem is to design an efficient and adaptive control strategy for a robot
that is subjected to dynamic uncertain wind forces while moving on a horizontal
one-dimensional plane. The goal is to maintain the robot's position as close to
the center as possible while minimizing energy cost and adapting to the
uncertainty of the motor's efficacy in the wind tunnel.

\begin{center}
\begin{tikzpicture}
  \draw[thick] (-2,0.25) -- (5,0.25);
  \node[below left] at (1.5, 0.25) {0};

  \draw[fill=blue!30] (0.5,0.5) rectangle (2.5,1.5);

  \draw[fill=black] (1,0.5) circle (0.3);
  \draw[fill=black] (2,0.5) circle (0.3);

  \node[below] at (1.5, 1.5) {Robot};
  \node[below, rotate=-90] at (2.55, 1.03) {:-)};

  \draw[dashed,->] (-1,1.75) -- (-0.5,1.75) node[midway,above] {$\rd W$};
  \draw[dashed,<-] (0.5,1.75) -- (1,1.75) node[midway,above] {$\rd W$};
  \draw[dashed,->] (2,1.75) -- (2.5,1.75) node[midway,above] {$\rd W$};
  \draw[dashed,->] (3,1.75) -- (3.5,1.75) node[midway,above] {$\rd W$};

  \draw[red,<-,line width=1pt] (2.5,1) -- (3.5,1) node[midway,above] 
  {$\lambda$} node[right] {u};
\end{tikzpicture}
\end{center}

The robot is newly built and the one-dimensional efficacy $\lambda$ of the
motor is uncertain in this environment. It is subject to wind $\rd W$ pushing
it back and forth on the one-dimensional plane. Only the position $Y^u$ of the
robot on the horizontal plane can be observed, in particular we cannot directly
observe the efficacy of the control $u$ through the motor or the wind $W$. As a
consequence, $\lambda$ must be estimated online from the position $Y^u$ of the
robot. The energy cost is taken into account quadratically (cost of control)
and the robot should be kept near the center. Deviation is penalized
quadratically during the task and at the end. We choose coefficient and cost
functions
\begin{align*}
  b(t,y,u) &=u, & \sigma(t,y,u) &= \sigma_0\\
  k(t,y,u,\ell) &= cy^2 + \rho u^2, & g(y,\ell) &= Cy^2,
\end{align*}
where the model parameters are given by
\[
  \sigma_0 = 1,\qquad
  T = 1,\qquad
  \rho = 2,\qquad
  c = 2,\qquad
  \text{and}\qquad
  C = 5.
\]
In the case of an observable efficacy $\lambda\in\R$, the problem reduces to a
standard stochastic linear-quadratic control problem which can be solved
explicitly up to the solution of a system of Riccati differential equations.
To be precise, the value function in the observable case is of the form
$V_{LQ}^{\lambda}(t,a)=f_1^\lambda(t)a^2+f_2^\lambda(t)$, where $f_1^\lambda,f_2^\lambda:[0,T]\to\R$ solve
\[
  0=\dot{f}_1^\lambda(t)-\frac{\lambda^2(f_1^\lambda)^2(t)}{\rho^2}+c
  \qquad\text{and}\qquad
  0=\dot{f}_2^\lambda(t)+f_1^\lambda(t)
\]
with terminal condition $f_1^\lambda(T)=C,f_2^\lambda(T)=0$. The feedback map
$u^\lambda_{LQ}:[0,T]\times\R\to\R$ for the optimal control in this problem is
given by
\[
  u^\lambda_{LQ}(t,a) =-\frac{\lambda\partial_aV_{LQ}^\lambda(t,a)}{2\rho}
    =-\frac{\lambda f_1^\lambda(t)}{\rho}a.
\]
The case of an unobservable $\lambda$ does not admit a closed-form solution and
has to be solved numerically. Here, we assume that $\lambda$ is uniformly
distributed over $[0,1]$ and compare the numerical approximation of an optimal
control for this problem with two benchmark controls obtained from the problem
with observable $\lambda$. The first one, the \emph{na\"{i}ve control}, is
constructed by replacing the random $\lambda$ by its mean
$\bar{\lambda}:=\E[\lambda]=0.5$, that is by considering the problem with
observable efficacy chosen as $\bar\lambda$. In other words, the na\"{i}ve
control in feedback form is given by
\[
  u^{naive}(t,a) := u^{\bar\lambda}_{LQ}(t,a) = -\frac{\bar\lambda f_1^{\bar\lambda}(t)}{\rho}a.
\]
The na\"{i}ve control does no updating of the estimate of $\lambda$ and thus
does not account for learning. The second benchmark control, the \emph{certainty
equivalent} (CE) control, is constructed by, at each time $t\in[0,T]$, acting
as if the conditional mean was the true $\lambda$, that is by replacing
$\lambda$ by its conditional mean $\E[\lambda|\cY^u_{t}]$ in the problem with
observable efficacy. Using the Markovian representation of the conditional mean
via $G$, the CE control is hence given
\[
  u^{CE}(t,x) := u^{G(\upsilon,\gamma)}_{LQ}(t,a).
\]
The CE control is built on the idea that the conditional mean is the best
approximation of $\lambda$ but ignores the effect of the control on higher
order moments, that is, it does not optimize for the dual effect. The expected
cost $V^{naive}$ and $V^{CE}$ associated with the two benchmark controls
$u^{naive}$ and $u^{CE}$ is computed by solving the linear PDE obtained by
plugging the benchmark controls into the HJB equation.

\subsection{Numerical implementation and results}

The controls and the associated expected costs are computed using a combination
of the deep Galerkin method (DGM) and policy iteration on the respective
PDEs. The choice of a deep learning method
over classical finite difference methods comes from the observation that the
latter methods are moderately inefficient due to the dimension of the
state space being equal to $1+3$.

Regarding the implementation of the DGM, let us highlight that we do not
approximate the value function directly, but rather approximate the value
function by $(t,x)\mapsto(T-t)V_\theta(t,x)+\tilde{g}(x)$ where $V_\theta$ is a
neural network parameterized by $\theta$. This directly embeds the terminal
condition into the approximating function. Second, we use the same DGM
architecture as suggested in \cite{sirignano2018dgm} with two layers for
$V_\theta$, and a simple two layer feedforward neural network for the
approximating control. Each sub-layer has 512 nodes. We use the Adam optimizer
with learning rate of $0.001$ for value function and control and alternate
gradient steps minimizing the infimum in the Hamiltonian and the DGM loss
functional in a 1:1 relation. We use batches of 7500 points and obtain a
terminal loss below $0.001$ after approximately $16\,000$ training epochs. As
an activation function, we use Sigmoid for both neural networks. The code is
available on
GitHub.\footnote{\scriptsize\url{https://github.com/AlexanderMerkel/Optimal-adaptive-control-with-separable-drift-uncertainty}}

\begin{figure}[H]
        \centering
        \tikzset{every picture/.style={scale=1.0}}
        \input{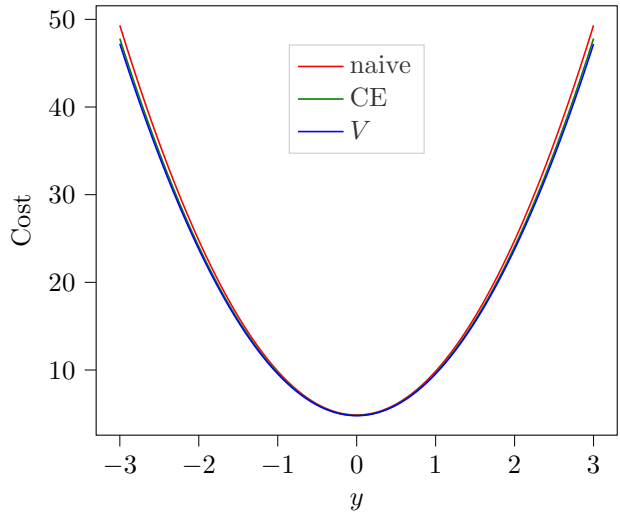}
        \caption{Cost vs.~state \( y \)}
        \label{fig:F1}
\end{figure}

\begin{figure}[H]
    \begin{subfigure}[t]{0.49\textwidth}
        \centering
        \tikzset{every picture/.style={scale=0.87}}
\begin{tikzpicture}

    \definecolor{darkgray176}{RGB}{176,176,176}
    \definecolor{green}{RGB}{0,128,0}
    \definecolor{lightgray204}{RGB}{204,204,204}
    
    \begin{axis}[
    legend cell align={left},
    legend style={
      fill opacity=0.8,
      draw opacity=1,
      text opacity=1,
      at={(0.03,1.10)},
      anchor=north west,
      draw=lightgray204
    },
    tick align=outside,
    tick pos=left,
    x grid style={darkgray176},
    xlabel={Conditional Variance},
    xmin=0.0601710110902786, xmax=0.0843926638364792,
    xtick style={color=black},
    y grid style={darkgray176},
    ylabel={Cost},
    ymin=8.93390545845032, ymax=9.33008971214294,
    ytick style={color=black}
    ]
    \addplot [semithick, red]
    table {%
    0.0612720251083374 9.13327598571777
    0.0613479614257812 9.13335800170898
    0.0614240169525146 9.13345623016357
    0.0615382194519043 9.13363456726074
    0.0616145133972168 9.13377094268799
    0.0617291927337646 9.13400268554688
    0.0618441104888916 9.13426971435547
    0.0619592666625977 9.13456535339355
    0.0621130466461182 9.13500785827637
    0.0622673034667969 9.13550281524658
    0.0624217987060547 9.13604736328125
    0.0625767707824707 9.13663768768311
    0.0627322196960449 9.13727378845215
    0.0629270076751709 9.13812637329102
    0.0631225109100342 9.13904476165771
    0.0633184909820557 9.14002227783203
    0.0635546445846558 9.14126777648926
    0.0637917518615723 9.1425895690918
    0.064069390296936 9.14421844482422
    0.0643482208251953 9.14593696594238
    0.0646284818649292 9.14773464202881
    0.0649904012680054 9.15015983581543
    0.0653139352798462 9.15241241455078
    0.0656797885894775 9.15505027770996
    0.0660886764526367 9.15810298919678
    0.0664588212966919 9.16095733642578
    0.0668309926986694 9.16390991210938
    0.0672886371612549 9.16765213012695
    0.0677913427352905 9.17190551757812
    0.0682129859924316 9.17558097839355
    0.0686371326446533 9.17937660217285
    0.0690639019012451 9.18328666687012
    0.0696655511856079 9.18894386291504
    0.0703589916229248 9.19564056396484
    0.0710588693618774 9.20252990722656
    0.074012279510498 9.23187637329102
    0.0749282836914062 9.24078941345215
    0.075621485710144 9.24742698669434
    0.0761799812316895 9.25268936157227
    0.0767418146133423 9.25789546966553
    0.0773544311523438 9.26345062255859
    0.0779234170913696 9.26848793029785
    0.0784478187561035 9.2730188369751
    0.0790231227874756 9.27786636352539
    0.0796018838882446 9.28261852264404
    0.0800865888595581 9.28652095794678
    0.080866813659668 9.29269409179688
    0.0819984674453735 9.30159950256348
    0.0826432704925537 9.30676460266113
    0.0832417011260986 9.31166648864746
    0.0832916498184204 9.3120813369751
    };
    \addlegendentry{naive}
    \addplot [semithick, green]
    table {%
    0.0612720251083374 9.03486442565918
    0.0613859891891479 9.03448963165283
    0.0615001916885376 9.03415489196777
    0.0616145133972168 9.03386402130127
    0.0617291927337646 9.03361415863037
    0.0618441104888916 9.03340530395508
    0.0619207620620728 9.03328704833984
    0.0620360374450684 9.03314304351807
    0.0621130466461182 9.0330696105957
    0.062190055847168 9.03301239013672
    0.0622673034667969 9.03297233581543
    0.0623444318771362 9.0329475402832
    0.0624217987060547 9.03293991088867
    0.0624992847442627 9.03294849395752
    0.0625767707824707 9.03297138214111
    0.0626544952392578 9.0330114364624
    0.0627322196960449 9.03306579589844
    0.062809944152832 9.03313541412354
    0.0629270076751709 9.03326606750488
    0.0630441904067993 9.0334300994873
    0.0631616115570068 9.03362560272217
    0.0632792711257935 9.03384971618652
    0.0633971691131592 9.03410530090332
    0.0635151863098145 9.03438663482666
    0.0636335611343384 9.03469562530518
    0.0637917518615723 9.0351505279541
    0.0639501810073853 9.03564834594727
    0.064109206199646 9.03619003295898
    0.0642684698104858 9.03677177429199
    0.0644282102584839 9.03739166259766
    0.0646284818649292 9.03821849822998
    0.0648293495178223 9.03909683227539
    0.0650308132171631 9.04002475738525
    0.0652732849121094 9.04120063781738
    0.065557599067688 9.04264831542969
    0.0658429861068726 9.04417133331299
    0.0661708116531372 9.04599380493164
    0.0666239261627197 9.04862594604492
    0.0670386552810669 9.05112552642822
    0.0675395727157593 9.05423736572266
    0.0682129859924316 9.05853652954102
    0.0691496133804321 9.06463432312012
    0.0702719688415527 9.07196807861328
    0.0712348222732544 9.07817935943604
    0.0721204280853271 9.08378982543945
    0.0730609893798828 9.08964920043945
    0.0754823684692383 9.10464859008789
    0.0760866403579712 9.10848617553711
    0.076977014541626 9.1142692565918
    0.0777808427810669 9.11959838867188
    0.0790712833404541 9.12817478179932
    0.0795536041259766 9.13129806518555
    0.0799894332885742 9.13404273986816
    0.080378532409668 9.13640975952148
    0.0807201862335205 9.1384162902832
    0.0810627937316895 9.14035224914551
    0.0814065933227539 9.14221382141113
    0.0817021131515503 9.1437520980835
    0.0819984674453735 9.14523506164551
    0.0823452472686768 9.14690208435059
    0.0826929807662964 9.14850902557373
    0.0830918550491333 9.15028762817383
    0.0832916498184204 9.15116214752197
    };
    \addlegendentry{CE}
    \addplot [semithick, blue]
    table {%
    0.0612720251083374 8.95191383361816
    0.0613859891891479 8.95208358764648
    0.0615001916885376 8.95227432250977
    0.0616527795791626 8.95256614685059
    0.0617674589157104 8.95280933380127
    0.0619207620620728 8.95316600799561
    0.0621130466461182 8.95366191864014
    0.0622673034667969 8.9540958404541
    0.0624217987060547 8.95455932617188
    0.0626544952392578 8.95531272888184
    0.0628490447998047 8.95598793029785
    0.0631225109100342 8.95699882507324
    0.0633971691131592 8.95808029174805
    0.0635941028594971 8.95889091491699
    0.0639106035232544 8.96024703979492
    0.0643084049224854 8.96203517913818
    0.0647087097167969 8.96390914916992
    0.0651519298553467 8.96605491638184
    0.0657205581665039 8.96889591217041
    0.0662117004394531 8.97141456604004
    0.0668309926986694 8.97466564178467
    0.06737220287323 8.97757339477539
    0.0679597854614258 8.98080348968506
    0.0684248208999634 8.98341655731201
    0.0688929557800293 8.98609733581543
    0.0694501399993896 8.98935508728027
    0.0700114965438843 8.99270725250244
    0.070708155632019 8.9969539642334
    0.0714552402496338 9.00158882141113
    0.0724327564239502 9.00770854949951
    0.073106050491333 9.0119047164917
    0.0736939907073975 9.01551628112793
    0.0741033554077148 9.01798629760742
    0.0746065378189087 9.02095222473145
    0.0751125812530518 9.02384090423584
    0.0755287408828735 9.02613830566406
    0.0758073329925537 9.02763557434082
    0.0762265920639038 9.02982330322266
    0.0765541791915894 9.03147983551025
    0.076977014541626 9.03355407714844
    0.0774490833282471 9.03579521179199
    0.0782568454742432 9.03951072692871
    0.0793120861053467 9.04435062408447
    0.0798439979553223 9.04688930511475
    0.0802323818206787 9.04882049560547
    0.080573558807373 9.05058097839355
    0.080866813659668 9.05214691162109
    0.0812591314315796 9.0543212890625
    0.0816035270690918 9.05630111694336
    0.0820974111557007 9.05923843383789
    0.0832916498184204 9.06647396087646
    };
    \addlegendentry{$V$}
    \end{axis}
    
    \end{tikzpicture}
    
        \caption{Cost vs.~conditional variance}
        \label{fig:F2}
    \end{subfigure}
    \begin{subfigure}[t]{0.49\textwidth}
        \centering
        \tikzset{every picture/.style={scale=0.87}}
        \input{VF_CM.tex}
        \caption{Cost vs.~conditional mean}
        \label{fig:F3}
    \end{subfigure}
    \label{fig:composite}
\end{figure}
\Cref{fig:F1} compares the expected cost as functions of the initial state $y$,
for fixed time and auxiliary states $(t,\upsilon,\gamma)=(0,0,0)$. It shows a
small difference between the cost of the adaptive control and the cost induced
by the na\"ive and CE control, with the adaptive control leading to the smallest
total cost overall.

In \Cref{fig:F2}, we fix time $t=0.1$ and state $y=1$, and a target
conditional variance of $0.07$. We then identify pairs $(\upsilon,\gamma)$ such
that $G_\upsilon(\upsilon,\gamma)\approx 0.07$ and plot the expected cost with
$(t,y)$ fixed as a function of the conditional mean $G(\upsilon,\gamma)$. A
similar process is used for \Cref{fig:F3}, where we plot the expected cost as a
function of the conditional variance with a target conditional mean of $0.52$.
The numerical results show that there is a \emph{substantial} difference in the control actions and resulting costs, thus suggesting a significant advantage of using the adaptive control over the na\"{i}ve and CE control.

More precisely, \Cref{fig:F2} and \Cref{fig:F3} show that the CE control and
the adaptive control outperform the na\"ive control. We furthermore observe in
\Cref{fig:F2} that the expected cost is decreasing in conditional mean (which
is expected, as we plot for state $y=1$). More significantly, we see in 
\Cref{fig:F3} that the adaptive control shows a substantial difference compared
to the CE control. Moreover, for all three controls the expected cost is
increasing in the conditional variance, illustrating the failure of the
separation principle in this context.

\section*{Acknowledgements} This research was supported by the Deutsche Forschungsgemeinschaft through the Berlin--Oxford IRTG 2544: Stochastic Analysis in Interaction. SC also acknowledges the support of the UKRI Prosperity Partnership Scheme (FAIR) under EPSRC Grant EP/V056883/1, the Alan Turing Institute, and the Oxford--Man Institute for Quantitative Finance.

\appendix
\section{Appendix}\label{sec:Appendix}

\subsection{Computations of Section~\ref{sec:Active-Learning-Aspect}}
\label{app:Active-Learning}

In this appendix we elaborate on the calculations of
Section~\ref{sec:Active-Learning-Aspect} on the effect of the control of
higher order conditional moments of $\lambda$. Recall that, for simplicity, the
calculations are performed for the one-dimensional case $m=1$.
To begin with, we use It\^{o}'s formula to compute
\begin{align*}
  \rd m^u_t = \rd G(\Upsilon^u_t,\Gamma^u_t)
  &= G_\upsilon(\Upsilon^u_t,\Gamma^u_t)\rd\Upsilon^u_t
    +G_\gamma(\Upsilon^u_t,\Gamma^u_t)\rd\Gamma^u_t
    +\frac{1}{2}G_{\upsilon\upsilon}(\Upsilon^u_t,\Gamma^u_t)\rd 
      \langle\Upsilon^u\rangle_t\\
  &= \frac{b^2}{\sigma^2}(t,Y^u_t,u_t)
	\Bigl[G_\upsilon G+G_\gamma+\frac{1}{2}G_{\upsilon\upsilon}\Bigr]
    (\Upsilon^u_t,\Gamma^u_t)\rd t\\
  &\hspace{4cm}+G_\upsilon(\Upsilon^u_t,\Gamma^u_t)
    \frac{b}{\sigma}(t,Y^u_t,u_t)\rd V^u_t\\
  &= G_\upsilon(\Upsilon^u_t,\Gamma^u_t)\frac{b}{\sigma}(t,Y^u_t,u_t)\rd V^u_t,
\end{align*}
where the drift term indeed vanishes as the following calculation shows. First,
note that
\begin{align*}
  G_\upsilon &= \frac{F_{\upsilon\upsilon}F-F_{\upsilon}^2}{F^2},
  &
  G_\upsilon G &=\frac{F_{\upsilon\upsilon}F_{\upsilon}F-F_{\upsilon}^3}{F^3},\\
  G_{\upsilon\upsilon} &= \frac{F_{\upsilon\upsilon\upsilon}F^2-
    3F_{\upsilon\upsilon}F_{\upsilon}F+2F_{\upsilon}^3}{F^3},
  &
  G_\gamma &= \frac{F_{\upsilon\gamma}F-F_{\upsilon}F_{\gamma}}{F^2}.
\end{align*}
With this, it follows that
\begin{align*}
  &\mathrel{\phantom{=}}G_\upsilon G+\frac{1}{2}G_{\upsilon\upsilon}+G_\gamma\\
  &=\frac{1}{F^3}\Bigl(F_{\upsilon\upsilon}F_{\upsilon}F-F_{\upsilon}^3
    +\frac{1}{2}\Bigl[F_{\upsilon\upsilon\upsilon}F^2-
      3F_{\upsilon\upsilon}F_{\upsilon}F+2F_{\upsilon}^3\Bigr]
    +F_{\upsilon\gamma}F^2-F_{\upsilon}F_{\gamma}F\Bigr)\\
  &= \frac{1}{F}\Bigl(\frac{1}{2}F_{\upsilon\upsilon\upsilon}
    +F_{\upsilon\gamma}\Bigr) - \frac{F_\upsilon}{F^2}
    \Bigl(\frac{1}{2}F_{\upsilon\upsilon}+F_\gamma\Bigr).
\end{align*}
A direct computation shows that $F$ satisfies the backward heat equation
$F_\gamma=-\frac{1}{2}F_{\upsilon\upsilon}$, and hence also 
$F_{\upsilon\gamma}=-\frac{1}{2}F_{\upsilon\upsilon\upsilon}$ by
differentiating once with respect to $\upsilon$. From this, we conclude that
\[
  G_\upsilon G+\frac{1}{2}G_{\upsilon\upsilon}+G_\gamma = 0,
\]
that is, the drift term indeed vanishes.

Next, let us compute the effect of the control on the variance. We have
\begin{align*}
  \rd\text{var}^u_t &= \rd G_\upsilon(\Upsilon^u_t,\Gamma^u_t)\\
  &= G_{\upsilon\upsilon}(\Upsilon^u_t,\Gamma^u_t)\rd\Upsilon^u_t
    +G_{\upsilon\gamma}(\Upsilon^u_t,\Gamma^u_t)\rd\Gamma^u_t
    +\frac{1}{2}G_{\upsilon\upsilon\upsilon}(\Upsilon^u_t,\Gamma^u_t)
      \rd\langle\Upsilon^u\rangle_t\\
  &= \Bigl[G_{\upsilon\upsilon}G+G_{\upsilon\gamma}
    +\frac{1}{2}G_{\upsilon\upsilon\upsilon}\Bigr]
    (\Upsilon^u_t,\Gamma^u_t)\frac{b^2}{\sigma^2}(t,Y^u_t,u_t)\rd t\\
  &\hspace{5cm}+G_{\upsilon\upsilon}(\Upsilon^u_t,\Gamma^u_t)
    \frac{b}{\sigma}(t,Y^u_t,u_t)\rd V^u_t.
\end{align*}
Similarly to the calculations above, using the backward heat equation multiple
times, we find that
\begin{align*}
  &\mathrel{\phantom{=}}G_{\upsilon\upsilon}G+G_{\upsilon\gamma}
    +\frac{1}{2}G_{\upsilon\upsilon\upsilon}\\
  &= \frac{1}{2F^{4}}\Bigl[
    -2F_{\upsilon\upsilon\upsilon}F_{\upsilon}F^2
    +F_{\upsilon\upsilon\upsilon\upsilon}F^3
    +2F_{\upsilon\upsilon\gamma}F^3
    -2F_{\upsilon\upsilon}F_\gamma F^2
    +4F_\upsilon^2F_\gamma F\\
  &\hspace{6cm}-4F_\upsilon F_{\upsilon\gamma}F^2
    -3F_{\upsilon\upsilon}^2F^2
    +6F_{\upsilon\upsilon}F_\upsilon^2F-2F_\upsilon^4\Bigr]\\
  &=\frac{1}{2F^{4}}\Bigl[4F_\upsilon^2 F_\gamma F-2F_\upsilon^4\Bigr]
    =-G^2(G_2+G^2),
\end{align*}
and we conclude that
\[
  \rd\text{var}^u_t = -\Bigl[G^2(G_2+G^2)\Bigr](\Upsilon^u_t,\Gamma^u_t)
    \frac{b^2}{\sigma^2}(t,Y^u_t,u_t)\rd t
    +G_{\upsilon\upsilon}(\Upsilon^u_t,\Gamma^u_t)
    \frac{b}{\sigma}(t,Y^u_t,u_t)\rd V^u_t.
\]

\subsection{Proof of \Cref{th:comparison-principle}}\label{app:comparison}

This appendix is dedicated to the proof of the comparison principle
\Cref{th:comparison-principle}. A key ingredient in the proof is the existence
of a strict classical subsolution of the HJB equation in the sense of the
following definition.

\begin{definition}\label{def:strict-classical-subsolution}
We say that a function $\psi\in\cC^{1,2}(\S)$ is a strict classical subsolution
of \eqref{eq:HJB}, if there exists a continuous function
$\kappa:\S\rightarrow(0,\infty)$ such that
\[
  \partial_t\psi(t,x)+\inf_{u\in\cU}\{\cL^u\psi(t,x)+\tilde{k}(t,x,u)\}
  \leq -\kappa(t,x) < 0,
  \quad \psi(T,x)\leq -\kappa(T,x)<0,
\]
for all $(t,x)\in\S$.
\end{definition}

The first step in the proof of the comparison principle consists of establishing
the existence of a strict classical subsolution growing sufficiently fast at infinity.

\begin{lemma}\label{lem:strict-classical-subsolution}
Let $\tilde{q}\geq 2$. There exist $\zeta,M>0$ such that
$\psi:\S\to(-\infty,0]$ defined as
\begin{equation}\label{eq:def-strict-class-sub}
	\psi(t,x):=-|x|^{\tilde{q}}\exp(\zeta(T-t))-M(1+T-t),\qquad (t,x)\in\S,
\end{equation}
is a strict classical subsolution of \eqref{eq:HJB}.
\end{lemma}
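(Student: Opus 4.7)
The plan is a direct verification: compute the derivatives of $\psi$ explicitly, bound $\cL^u\psi$ and $\tilde{k}$ in terms of $|x|$ uniformly in $u\in\cU$, and then tune $\zeta$ and $M$ so that the strict subsolution inequality from \Cref{def:strict-classical-subsolution} holds at every $(t,x)\in\S$. Writing $E(t):=\exp(\zeta(T-t))$, one reads off $\partial_t\psi=\zeta|x|^{\tilde{q}}E+M$, $\rD\psi=-\tilde{q}|x|^{\tilde{q}-2}x\,E$, and
\[
  \rD^2\psi=-E\bigl[\tilde{q}|x|^{\tilde{q}-2}I+\tilde{q}(\tilde{q}-2)|x|^{\tilde{q}-4}xx^\intercal\bigr],
\]
which is negative semidefinite because $|x|^{\tilde{q}}$ is convex for $\tilde{q}\geq 2$.

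Next, I would control $\cL^u\psi$ uniformly in $u$. By \Cref{asspt-coeff} the functions $b$, $\sigma$, and $\sigma^{-1}$ are bounded, and by the observation following \cref{eq:def-G} we have $|G|\leq K$; hence every coefficient appearing in \cref{eq:inf_generator} is bounded by a constant independent of $(t,x,u)$. Combined with the derivatives above, this yields $|\cL^u\psi|\leq C(|x|^{\tilde{q}-1}+|x|^{\tilde{q}-2})E$ uniformly in $u$, with the second-order part moreover having a definite sign thanks to $\rD^2\psi\preceq 0$. The running cost is bounded via \Cref{asspt-cost} by $0\leq\tilde{k}(t,x,u)\leq C(1+|x|^p)$.

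Assembling the pieces, $\partial_t\psi+\inf_u\{\cL^u\psi+\tilde{k}\}$ decomposes into a leading contribution of order $|x|^{\tilde{q}}E$ coming from the time derivative, lower-order remainders of order $|x|^{\tilde{q}-1}E$ and $|x|^{\tilde{q}-2}E$ from $\cL^u\psi$, a cost term of order $|x|^p$, and the constant $M$. I would first enlarge $\tilde{q}$ if necessary so that $\tilde{q}\geq\max(2,p+1)$, ensuring the cost growth is dominated by the spatial scale of $\psi$; then choose $\zeta$ large enough that the $|x|^{\tilde{q}}$ term dominates all other $x$-dependent contributions on the far field $\{|x|\geq R\}$; and finally choose $M$ large enough to close the inequality on the complementary region $\{|x|\leq R\}$ uniformly in $t$ and $u$. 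This furnishes a continuous $\kappa(t,x)>0$ certifying the strict inequality, while the terminal bound $\psi(T,x)=-|x|^{\tilde{q}}-M\leq -M<0$ is immediate. The main technical obstacle is the quantitative book-keeping needed to make the estimates uniform in $u\in\cU$ and $(t,x)\in\S$ simultaneously rather than merely asymptotically, which requires tracking how the constants from \Cref{asspt-coeff} and \Cref{lem:G-is-Lip} interact with $\tilde{q}$ and $\zeta$ across the near/far-field split; the convexity of $|x|^{\tilde{q}}$ for $\tilde{q}\geq 2$, which pins the sign of the diffusion contribution, is essential.
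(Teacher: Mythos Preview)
Your approach is essentially the paper's: compute the derivatives of $\psi$, bound $\cL^u\psi$ uniformly in $u$ using boundedness of the coefficients, and then tune $\zeta$ and $M$. The paper arrives at the same bounds $|\cL^u\psi|\lesssim(|x|^{\tilde q-1}+|x|^{\tilde q-2})E$ and closes in the same way.

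There is, however, one genuine gap in your handling of $\tilde{k}$. You invoke \Cref{asspt-cost} to get $\tilde{k}\leq C(1+|x|^p)$ and then propose to ``enlarge $\tilde{q}$ if necessary so that $\tilde{q}\geq\max(2,p+1)$''. This is not permitted: the lemma is stated for an \emph{arbitrary} given $\tilde{q}\geq 2$, and the comparison principle later needs the freedom to pick $\tilde{q}$ just barely larger than the growth exponent $q$ of the semicontinuous envelopes, independently of $p$. More importantly, the step is unnecessary. Since $k\geq 0$ by the standing hypotheses, one has $\tilde{k}=F[k]/F\geq 0$, and therefore
\[
  \inf_{u\in\cU}\bigl\{\cL^u\psi+\tilde{k}(\cdot,u)\bigr\}\ \geq\ \inf_{u\in\cU}\cL^u\psi,
\]
so the running cost only helps the strict inequality and can simply be dropped; this is exactly what the paper does. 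Once you replace your upper bound on $\tilde{k}$ by the trivial lower bound $\tilde{k}\geq 0$, the $|x|^p$ term disappears, no constraint linking $\tilde{q}$ and $p$ is needed, and the rest of your argument (choose $\zeta$ so that $\zeta|x|^{\tilde q}$ dominates the $|x|^{\tilde q-1}$ and $|x|^{\tilde q-2}$ remainders on the far field, then choose $M$ for the near field) goes through verbatim and matches the paper's proof.
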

\begin{proof}
Regularity is clear. For any $(t,x)\in\S$, the derivatives of $\psi$ can be
computed explicitly and are given by
\begin{align*}
	\partial_{t}\psi(t,x)&=\zeta |x|^{\tilde{q}}\exp(\zeta(T-t))+M,\\
	\rD_{x}\psi(t,x)&=-\tilde{q}x|x|^{\tilde{q}-2}\exp\bigl(\zeta(T-t)\bigr),\\
	\rD_{xx}\psi(t,x)&=-\tilde{q}|x|^{\tilde{q}-4}\bigl[(\tilde{q}-2)xx^\intercal
      +\diag\bigl(|x|^{2}\bigr)\bigr]\exp\bigl(\zeta(T-t)\bigr).
\end{align*}
Note that the running cost $\tilde{k}$ is non-negative, so it suffices to show that there exists a continuous $\kappa:\S\to(0,\infty)$ such that
\[
	-\partial_{t}\psi-\inf_{u\in \cU}\cL^{u}\psi\leq -\kappa\quad \text{on }\S.
\]
By Cauchy--Schwarz and using that $f$ is bounded (by boundedness of $b,\sigma^{-1}$, and $G$), there exists $C_1>0$ such that
\[
	-f(t,x,u)^{\intercal}\rD_{x}\psi(t,x)
	\leq |f(t,x,u)||\rD_{x}\psi(t,x)|
	\leq C_1|x|^{\tilde{q}-1}\exp(\zeta(T-t)).
\]
Similarly, again by Cauchy--Schwarz and boundedness of $\Sigma$, there exists $C_2>0$ such that
\begin{multline*}
	-\frac{1}{2}\trace\bigl[\Sigma(t,x,u)\Sigma(t,x,u)^{\intercal}\rD_{xx}\psi(t,x)\bigr]\\
	\leq \frac{1}{2}||\Sigma(t,x,u)||^{2}||\rD_{xx}\psi(t,x)||
	\leq C_2|x|^{\tilde{q}-2}\exp(\zeta(T-t)).
\end{multline*}
Putting this together yields the existence of $N_{1},N_{2},N_{3}>0$ independent of $\zeta,M$ such that
\begin{align*}
	-\cL^{u}\psi(t,x)
	\leq \bigl(N_{1}|x|^{2}+N_{2}|x|+N_{3}\bigr)|x|^{\tilde{q}-2}\exp\bigl(\zeta(T-t)\bigr).
\end{align*}
Defining
\[
	\kappa(t,x):=M-\big((N_{1}-\zeta)|x|^{2}+N_{2}|x|+N_{3}\big)|x|^{\tilde{q}-2}\exp\bigl(\zeta(T-t)\bigr),\quad (t,x)\in\S
\]
and choosing $\zeta>N_{1}$, the set $\{x\in\R^{\tilde{m}}:(N_{1}-\zeta)|x|^{2}+N_{2}|x|+N_{3}\geq0\}$ is bounded. Hence $-\kappa$ is bounded from above and choosing $M$ large enough yields
\[
	-\partial_{t}\psi(t,x)-\inf_{u\in \cU}\cL^{u}\psi(t,x)
	\leq -\kappa(t,x)<0
\]
as desired.
\end{proof}

To proceed, we need to introduce some additional notation. In what follows,
we express the HJB
equation in terms of the function
$\rF:\S\times\R\times\R^{\tilde{m}}\times\cS_{\tilde{m}}\to\R$ given by
\[
 \rF(t,x,p,q,Q)\\:=-p-\inf_{u\in \cU}
\Bigl\{f(t,x,u) q+\frac{1}{2}\trace\bigl[(\Sigma(t,x,u)\Sigma(t,x,u)^{\intercal}Q\bigr]+\tilde{k}(t,x,u)\Bigr\}
\]
for all $(t,x,p,q,Q)\in \S\times\R\times\R^{\tilde{m}}\times\cS_{\tilde{m}}$.
Next, we write $\intS:=[0,T)\times\R^{\tilde{m}}$ for
the parabolic interior of the state space. With this notation in place, we
subsequently agree that when we speak of a viscosity subsolution of 
\cref{eq:HJB} we mean a function $w:\S\to\R$ which satisfies
of
\[
  F(\argdot,\partial_t w,\rD_x w,\rD_{xx} w) \leq 0\qquad\text{on }\intS
\]
in the viscosity sense and satisfies $w(T,\argdot)\leq \tilde{g}$ on
$\R^{\tilde{m}}$. Similarly, we say that $w$ is a $\kappa$-strict viscosity
subsolution
if
\[
  F(\argdot,\partial_t w,\rD_x w,\rD_{xx} w) \leq -\kappa\qquad\text{on }\intS
\]
in the viscosity sense for some strictly positive continuous function
$\kappa:\S\to(0,\infty)$ (without any requirements on the behavior of $w$ at
time $T$). We use analogous conventions for (strict) viscosity supersolutions.

Finally, we denote by $LSC(\S)$ and $USC(\S)$ the sets of real-valued lower
and upper semicontinuous functions on $\S$, respectively. With this, we
can formulate the following perturbation result. The result is well-known
and can e.g.\ be found in \cite{ishii1993viscosity} in a slightly different
context; we nevertheless present
a proof to keep the paper self-contained.

\begin{lemma}\label{lem:perturb}
Fix $\rho>1$, let $U\in USC(\S)$, $\psi$ and $\kappa$ as in
\Cref{lem:strict-classical-subsolution}, and define the perturbation
\begin{equation}\label{eq:def-perturb-fcts}
  U^{\rho}:=\frac{\rho-1}{\rho}U+\frac{1}{\rho}\psi.
\end{equation}
If $U$ is a viscosity subsolution of \eqref{eq:HJB}, then $U^{\rho}$ is a
$\kappa/\rho$--strict viscosity subsolution.
\end{lemma}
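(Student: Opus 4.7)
The plan is to exploit the fact that the Hamiltonian $\rF$ appearing in the HJB equation is convex in its $(p,q,Q)$-arguments. Indeed, since
\[
  \rF(t,x,p,q,Q)=-p-\inf_{u\in\cU}\Bigl\{f(t,x,u)q+\tfrac12\trace\bigl[\Sigma\Sigma^{\intercal}Q\bigr]+\tilde{k}(t,x,u)\Bigr\}
\]
is a sum of a linear term and a pointwise supremum of affine functions (because $-\inf_u$ of affine functions is convex), the map $(p,q,Q)\mapsto\rF(t,x,p,q,Q)$ is convex for each fixed $(t,x)$. The perturbed function $U^{\rho}$ is a convex combination of $U$ and $\psi$, so the claim should follow by a standard test-function manipulation combined with this convexity.

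More precisely, I would invert the definition of $U^\rho$ to write $U=\frac{\rho}{\rho-1}U^{\rho}-\frac{1}{\rho-1}\psi$. Let $\varphi\in\cC^{1,2}(\S)$ be any test function touching $U^{\rho}$ from above at some $(t_0,x_0)\in\intS$, and define
\[
  \tilde{\varphi}:=\frac{\rho}{\rho-1}\varphi-\frac{1}{\rho-1}\psi.
\]
Since $\psi$ is smooth, $\tilde{\varphi}\in\cC^{1,2}(\S)$, and the identity $U-\tilde{\varphi}=\frac{\rho}{\rho-1}(U^{\rho}-\varphi)$ shows that $\tilde{\varphi}$ touches $U$ from above at $(t_0,x_0)$. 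The viscosity subsolution property of $U$ then yields $\rF(t_0,x_0,\partial_t\tilde{\varphi},\rD_x\tilde{\varphi},\rD_{xx}\tilde{\varphi})\leq0$ at $(t_0,x_0)$, while $\psi$ being a strict classical subsolution gives $\rF(t_0,x_0,\partial_t\psi,\rD_x\psi,\rD_{xx}\psi)\leq-\kappa(t_0,x_0)$.

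To conclude, observe that at $(t_0,x_0)$ the definition of $\tilde{\varphi}$ yields the convex combination identity
\[
  (\partial_t\varphi,\rD_x\varphi,\rD_{xx}\varphi)=\frac{\rho-1}{\rho}(\partial_t\tilde{\varphi},\rD_x\tilde{\varphi},\rD_{xx}\tilde{\varphi})+\frac{1}{\rho}(\partial_t\psi,\rD_x\psi,\rD_{xx}\psi).
\]
Applying convexity of $\rF$ in its last three arguments then gives
\[
  \rF(t_0,x_0,\partial_t\varphi,\rD_x\varphi,\rD_{xx}\varphi)\leq\frac{\rho-1}{\rho}\cdot0+\frac{1}{\rho}\bigl(-\kappa(t_0,x_0)\bigr)=-\frac{\kappa(t_0,x_0)}{\rho},
\]
which is precisely the $\kappa/\rho$-strict viscosity subsolution property at the arbitrary interior point $(t_0,x_0)$.

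I expect no serious obstacle in this proof: the entire argument reduces to identifying the convexity of $\rF$ (the only nonlinearity is the infimum over $u$, which is concave in the affine arguments) and verifying one algebraic identity. The only care required is in the test-function conversion, which works because $\psi$ is smooth and the map $\varphi\mapsto\tilde{\varphi}$ is affine with the coefficients matching up so that the convex-combination identity holds exactly at $(t_0,x_0)$.
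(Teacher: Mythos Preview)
Your proof is correct and follows essentially the same approach as the paper. The paper defines the same transformed test function $\varphi:=\frac{\rho}{\rho-1}\varphi^{\rho}-\frac{1}{\rho-1}\psi$ (your $\tilde\varphi$), applies the subsolution property of $U$ to it, and then uses exactly the convexity inequality for $\rF$ that you identify explicitly; the only cosmetic difference is that the paper writes out the convexity step as a displayed inequality without naming it as such.
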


\begin{proof}
Fix $(t,x)\in\S$ and let $\varphi^{\rho}\in\cC^{1,2}(\S)$ be a test function
for the subsolution property of $U^{\rho}$ at $(t,x)$, that is
$U^{\rho}\leq \varphi^{\rho}$ on $\intS$ and $U^{\rho}(t,x)=\varphi^{\rho}(t,x)$. By definition of $U^{\rho}$ we find that
\[
  U\leq\varphi := \frac{\rho}{\rho-1}\varphi^{\rho}-\frac{1}{\rho-1}\psi
  \qquad\text{and}\qquad
  U(t,x)=\varphi(t,x),
\]
from which we obtain that $\varphi$ is a test function for the subsolution
property of $U$ and thus
\begin{equation}\label{eq:perturbation_proof}
  \rF\bigl(t,x,\partial_{t}\varphi(t,x),
    \rD_{x}\varphi(t,x),\rD_{xx}\varphi(t,x)\bigr)\leq 0.
\end{equation}
Using the definition of $\varphi$ one more time and the fact that $\psi$
is a strict classical subsolution of \cref{eq:HJB}, it follows that
\begin{align*}
  &\rF\bigl(t,x,\partial_{t}\varphi^{\rho}(t,x),
    \rD_{x}\varphi^{\rho}(t,x),\rD_{xx}\varphi^{\rho}(t,x)\bigr)\\
  &\hspace{2cm}\leq\frac{\rho-1}{\rho}\rF\bigl(t,x,\partial_{t}\varphi(t,x),
    \rD_x\varphi(t,x),\rD_{xx}\varphi(t,x)\bigr)\\
  &\hspace{5cm} +\frac{1}{\rho}\rF\bigl(t,x,\partial_{t}\psi(t,x),
    \rD_x\psi(t,x),\rD_{xx}\psi(t,x)\bigr)\\
  &\hspace{2cm}\leq\frac{\rho-1}{\rho}\rF\bigl(t,x,\partial_{t}\varphi(t,x),
    \rD_x\varphi(t,x),\rD_{xx}\varphi(t,x)\bigr)-\frac{1}{\rho}\kappa(t,x).
\end{align*}
Rearranging terms and using \cref{eq:perturbation_proof} yields
\begin{multline*}
  \rF\bigl(t,x,\partial_{t}\varphi^{\rho}(t,x),
    \rD_x\varphi^{\rho}(t,x),\rD_{xx}\varphi^{\rho}(t,x)\bigr)
    +\frac{1}{\rho}\kappa(t,x)\\
  \leq \frac{\rho-1}{\rho}\rF\bigl(t,x,\partial_{t}\varphi(t,x),
    \rD_x\varphi(t,x),\rD_{xx}\varphi(t,x)\bigr) \leq 0
\end{multline*}
which concludes the proof.
\end{proof}

We are now ready for the proof of the comparison principle.

\begin{proof}[Proof of \Cref{th:comparison-principle}]
Fix $\tilde{q}\geq 2$ such that $\tilde{q}>q$ and let $\psi$ be an associated
strict classical subsolution as constructed in
\Cref{lem:strict-classical-subsolution}. Next, let $\rho>1$ and denote by
$U^\rho$ the perturbation of $U$ in terms of $\rho$ and $\psi$ as in
\Cref{lem:perturb}. We proceed to show that $U^\rho\leq W$, implying that
$U\leq W$ by sending $\rho\to\infty$. We argue by contradiction by
assuming that there exists $(t^*,x^*)\in\S$ with
\begin{equation}\label{eq:proof-CP-contr-asspt}
  U^{\rho}(t^*,x^*)-W(t^*,x^*)>0.
\end{equation}
For all $k\in\N$, we then define a function
\[
  \varphi_k(t,x,\hat{x}) := U^{\rho}(t,x)-W(t,\hat{x})
    -\frac{k}{2}|x-\hat{x}|^{2}
\]
on the domain $\mfS:=[0,T]\times\R^{\tilde{m}}\times\R^{\tilde{m}}$ and set
\[
  \Theta_k:=\sup_{(t,x,\hat{x})\in\mfS} \varphi_{k}(t,x,\hat{x})
  \qquad\text{and}\qquad
  \Theta:=\sup_{(t,x)\in\S}\varphi_{0}(t,x,x).
\]
Using \cref{eq:proof-CP-contr-asspt}, we find that
\begin{equation}\label{eq:theta_positive}
  0<U^{\rho}(t^{*},x^{*})-W(t^{*},x^{*})
    \leq \Theta\leq \Theta_{k+1}\leq \Theta_{k}\leq \Theta_{0},\qquad k\in\N.
\end{equation}
Next, using that $U$ and $W$ are non-negative followed by the growth assumption on $U$ and finally $\tilde{q}>q$, we obtain
\[
  \Theta_{0}\leq\sup_{(t,x)\in\S}\Bigl\{U(t,x)+\frac{1}{\rho}\psi(t,x)\Bigr\}
    \leq \sup_{x\in\R^{\tilde{m}}}\bigg\{K(1+|x|^{q})
      -\frac{1}{\rho}|x|^{\tilde{q}}\bigg\}
	<\infty.
\]
From \cref{eq:theta_positive}, we see that any maximizing sequence for any
$\Theta_{k}$ must eventually be contained in the set
\begin{align*}
  A:=\bigl\{(t,x,\hat{x})\in\mfS:U^{\rho}(t,x)-W(t,\hat{x})\geq 0\bigr\}.
\end{align*}
The set $A$ is bounded and, by upper semicontinuity of $U^{\rho}-W$, closed and
thus compact. This guarantees the existence of a maximizer
$(t_{k},x_{k},\hat{x}_{k})\in A$ for $\Theta_{k}$. By compactness, we may
assume without loss of generality that the sequence
$\{(t_{k},x_{k},\hat{x}_{k})\}_{k\in\N}$ converges. Hence, by definition of
$\varphi_{k}$ and the fact that $\Theta_{k}>0$, we conclude that
\[
  0\leq \frac{k}{2}|x_{k}-\hat{x}_{k}|^{2}
    <U^{\rho}(t_{k},x_{k})-W(t_{k},\hat{x}_{k})
	\leq\sup_{(t,x,\hat{x})\in A}\big\{U^{\rho}(t,x)-W(t,\hat{x})\big\}<\infty,
\]
from which we find that
\[
  (\bar{t},\bar{x}):=\lim_{k\to\infty}(t_{k},x_{k})
	=\lim_{k\to\infty}(t_{k},\hat{x}_{k}).
\]
Combining $\Theta_{k}\geq\Theta$ and upper semicontinuity of $U^{\rho}-W$, we
furthermore see that
\begin{multline}\label{eq:proof-CP-limsup-to-0}
  0 \leq \limsup_{k\to\infty}\frac{k}{2}|x_{k}-\hat{x}_{k}|^{2}
    =\limsup_{k\to\infty}\{U^{\rho}(t_{k},x_{k})-W(t_{k},\hat{x}_{k})
      -\Theta_{k}\}\\
    \leq U^{\rho}(\bar{t},\bar{x})-W(\bar{t},\bar{x})-\Theta \leq 0.
\end{multline}
Finally, semicontinuity of $U$ and $W$ yields
\[
  \lim_{k\to\infty} U^{\rho}(t_{k},x_{k}) = U^{\rho}(\bar{t},\bar{x})
  \qquad\text{and}\qquad
  \lim_{k\to\infty}W(t_{k},\hat{x}_{k}) = W(\bar{t},\bar{x})
\]
as well as
\[
  \lim_{k\to\infty}\Theta_{k} = \Theta
    =U^{\rho}(\bar{t},\bar{x})-W(\bar{t},\bar{x}).
\]
From the last convergence we also obtain that $\bar{t}<T$, as else, using the
terminal inequality $U(T,\argdot)\leq W(T,\argdot)$,
\[
  0<\Theta
    =U^{\rho}(\bar{t},\bar{x})-W(\bar{t},\bar{x})
    =U(T,\bar{x})-W(T,\bar{x})+\frac{1}{\rho}\psi(T,\bar{x})\leq0,
\]
a contradiction. Hence, without loss of generality, we may assume $t_{k}<T$,
that is $(t_k,x_k),(t_k,\hat{x}_k)\in\intS$ for all $k\in\N$. From Ishii's
lemma, see Theorem 8.3 in \cite{crandall1992user}, for each $k$ we obtain
existence of matrices
$M_{k},\hat{M}_{k}\in\cS_{\tilde{m}}$ satisfying%
\footnote{Here, ${\bf I}_{\tilde{m}}$ denotes the identity matrix in
$\cS_{\tilde{m}}$.}
\begin{equation}\label{eq:Ishii-Matrix}
  \begin{pmatrix}
    M_{k} & 0 \\
    0 & -\hat{M}_{k}
  \end{pmatrix}
  \leq \begin{pmatrix}
    {\bf I}_{\tilde{m}} & -{\bf I}_{\tilde{m}} \\
    -{\bf I}_{\tilde{m}} & {\bf I}_{\tilde{m}}
  \end{pmatrix}
\end{equation}
and constants $q_k=-\hat{q}_k$ such that%
\footnote{Here, $\bar{J}^{2,+}U^{\rho}(t_{k},x_{k})$ and
$\bar{J}^{2,-}W(t_{k},\hat{x}_{k})$ denote the closures of second order
super- and subjets of $U^\rho$ and $W$, respectively.}
\begin{align*}
  \big(q_k,k(x_{k}-\hat{x}_{k}),M_{k}\big)
    \in\bar{J}^{2,+}U^{\rho}(t_{k},x_{k}),
  \quad
  \big(\hat{q}_k,k(x_{k}-\hat{x}_{k}),\hat{M}_{k}\big)
    \in \bar{J}^{2,-}W(t_{k},\hat{x}_{k}).
\end{align*}
By the perturbation result \Cref{lem:perturb} we know that $U^{\rho}$ is a
$\kappa/\rho$--strict viscosity subsolution of \eqref{eq:HJB}. Therefore, with
$\bar{\kappa}:=\inf_{(t,x,\hat{x})\in A}\{\kappa(t,x)\}>0$, we have
\[
  -\bar{\kappa}\geq\rF\bigl(t_{k},x_{k},q_k,k(x_{k}-\hat{x}_{k}),M_{k}\bigr).
\]
Combined with the supersolution property of $W$ and \cref{eq:Ishii-Matrix},
and Lipschitz-continuity of $f$ and $\Sigma$, this yields the existence of a
constant $C>0$ such that
\begin{align*}
  \bar{\kappa}
  &\leq\limsup_{k\to\infty}\Bigl[
    \rF\bigl(t_{k},x_{k},q_k,k(x_{k}-\hat{x}_{k}),M_{k}\bigr)
    -\rF\bigl(t_{k},\hat{x}_{k},\hat{q}_k,k(x_{k}-\hat{x}_{k}),\hat{M}_{k}\bigr)
  \Bigr]\\
  &\leq\limsup_{k\to\infty} \Bigl[
    Ck|x_{k}-\hat{x}_{k}|^{2}
    +\sup_{u\in U} \Bigl\{\tilde{k}(t_{k},x_{k},u)
    -\tilde{k}(t_{k},\hat{x}_{k},u)\Bigr\}
  \Bigr]\leq 0,
\end{align*}
where the last inequality is due to \cref{eq:proof-CP-limsup-to-0},
continuity of $\tilde{k}$, and compactness of $\cU$. Since $\kappa>0$, this
is the desired contradiction which concludes the proof.
\end{proof}

\subsection{Proof of \Cref{prop:dpp}}\label{subsec:dpp}

\begin{proof}[Proof of \Cref{prop:dpp}]
In what follows, we fix $(t,x)\in\S^n$ with $t<T$. We establish both
inequalities in the DPP separately.

\emph{Step 1}: Let $u\in\cU$ and choose a piecewise constant control
$\hat{u}\in\cA^n$ such that $\hat{u}=u$ on $[0,t+\delta_n]$. According to the
pseudo-Markov property established in Lemma 3.2.14 in
\cite{krylov2008controlled}, we have
\[
  \E\Bigl[\int_{t+\delta_n}^T\tilde{k}(s,\hat{X}^{\hat{u};t,x}_s,\hat{u}_s)\rd s
    + \tilde{g}(\hat{X}^{\hat{u};t,x}_s)\Big|\cF^W_{t+\delta_n}\Bigr]
  = \hat{\cJ}\bigl(\hat{u};t+\delta_n,\hat{X}^{u;t,x}_{t+\delta_n}\bigr).
\]
But then the tower property of conditional expectation yields
\begin{align*}
  \hat{V}(t,x)&=\E\Bigl[\int_t^{t+\delta_n}\tilde{k}(s,\hat{X}^{u;t,x}_s,u)\rd s
    +\hat{\cJ}\bigl(\hat{u};t+\delta_n,\hat{X}^{u;t,x}_{t+\delta_n}\bigr)\Bigr]\\
  &\geq \inf_{u\in\cU}\E\Bigl[\int_t^{t+\delta_n}
    \tilde{k}(s,\hat{X}^{u;t,x}_s,u)\rd s
    + \hat{V}^n\bigl(t+\delta_n,\hat{X}^{u;t,x}_{t+\delta_n}\bigr)\Bigr].
\end{align*}

\emph{Step 2}: We fix $\varepsilon>0$. Since
$\hat{\cJ}(u;t+\delta_n,\argdot)$ and $\hat{V}(t+\delta_n,\argdot)$ are
continuous functions (uniformly in $u\in\cA$) according to Lemma 3.2.2
in \cite{krylov2008controlled}, for any $y\in\R^{\tilde{m}}$ there exists
$\rho_y>0$ such that
\[
  |\hat{\cJ}(u;t+\delta_n,y) - \hat{\cJ}(u;t+\delta_n,\hat{y})|
    + |\hat{V}(t+\delta_n,y) - \hat{V}(t+\delta_n,\hat{y})|
    \leq \frac{1}{3}\varepsilon
\]
for all $u\in\cA$ and $\hat{y}\in B_{\rho_y}(y)$, where $B_{\rho_y}(y)$
is the open ball of radius $\rho_y$ centered around $y$. Now choose a sequence
$\{y_k\}_{k\in\N}$ in $\R^{\tilde{m}}$ such that
$\R^{\tilde{m}}=\bigcup_{k\in\N} B_{\rho_k}(y_k)$ and
$y_k\not\in B_{\rho_\ell}(y_\ell)$ whenever $k,\ell\in\N$ with $k\neq\ell$,
where we use the slightly abusive short-hand notation $\rho_k:=\rho_{y_k}$
for all $k\in\N$. From this, it follows that there exists a partition
$\{B_k\}_{k\in\N}$ of $\R^{\tilde{m}}$ of Borel sets such that $y_k\in B_k\subset
B_{\rho_k}(y_k)$ for all $k\in\N$. Next, for $k\in\N$, choose an
$(\varepsilon/3)$-optimal control $u_k\in\cA^n$ for $\hat{V}(t+\delta_n,y_k)$,
that is
\[
  \hat{\cJ}(u_k;t+\delta_n,y_k) \leq \hat{V}(t+\delta_n,y_k)
    + \frac{1}{3}\varepsilon,
\]
and it follows that
\[
  \hat{\cJ}(u_k;t+\delta_n,y)
  \leq \hat{\cJ}(u_k;t+\delta_n,y_k) + \frac{1}{3}\varepsilon
  \leq \hat{V}(t+\delta_n,y_k) + \frac{2}{3}\varepsilon
  \leq \hat{V}(t+\delta_n,y) + \varepsilon
\]
for all $y\in B_k$ and $k\in\N$. Now fix $u\in\cU$ and consider the control
$\hat{u}$ given by
\[
  \hat{u}:=u\quad\text{on }[0,t+\delta_n]
  \qquad\text{and}\qquad
  \hat{u}:=\sum_{k=1}^\infty
    u_k\mathds{1}_{\{\hat{X}^{u;t,x}_{t+\delta_n}\in B_k\}}
  \quad\text{on }(t+\delta_n,T].
\]
Clearly, $\hat{u}\in\cA^n$ and we conclude that
\begin{align*}
  \hat{V}^n(t,x)
    &\leq \hat{\cJ}(\hat{u};t,x)\\
    &= \E\Bigl[\int_t^{t+\delta_n}\tilde{k}(s,\hat{X}^{u;t,x}_s,u)\rd s
      + \sum_{k=1}^\infty \mathds{1}_{\{\hat{X}^{u;t,x}_{t+\delta_n}\in B_k\}}
      \hat{\cJ}\bigl(u_k;t+\delta_n,\hat{X}^{u;t,x}_{t+\delta_n}\bigr)\Bigr]\\
    &\leq \E\Bigl[\int_t^{t+\delta_n}\tilde{k}(s,\hat{X}^{u;t,x}_s,u)\rd s
      + \hat{V}^n\bigl(t+\delta_n,\hat{X}^{u;t,x}_{t+\delta_n}\bigr)\Bigr]
      + \varepsilon.
\end{align*}
Sending $\varepsilon\downarrow 0$ and taking the infimum over all $u\in\cU$
yields the result.
\end{proof}

\subsection{Proof of \Cref{th:supremum-is-supersolution}}
\label{app:supremum-stochastic-subsolutions}

The proof of \Cref{th:supremum-is-supersolution} is very similar to Theorem 4.1
in \cite{bayraktar2013stochastic}, and we follow the line of arguments very
closely.

\begin{proof}[Proof of \Cref{th:supremum-is-supersolution}]
\emph{Step 1}: The viscosity supersolution property in $\intS$.
Towards a contradiction, let $\varphi\in\cC^{1,2}(\intS)$ be a test function such
that $V^{-}-\varphi$ attains a strict global minimum equal to zero at some
$(\bar{t},\bar{x})\in\intS$ at which the viscosity supersolution property fails,
that is
\[
  \rF\bigl(\bar{x},\partial_{t}\varphi(\bar{t},\bar{x}),
    \rD_{x}\varphi(\bar{t},\bar{x}),\rD_{xx}\varphi(\bar{t},\bar{x})\bigr)<0.
\]
By continuity of $F$, we can find $\varepsilon>0$ such that
$\bar{t}+\varepsilon<T$ and such that
\begin{equation}\label{eq:proof-SSub-contr-in-ball}
  \rF\bigl(t,x,\partial_{t}\varphi(t,x),
    \rD_{x}\varphi(t,x),\rD_{xx}\varphi(t,x)\bigr)<0
    \quad \forall (t,x)\in B_{\varepsilon}(\bar{t},\bar{x}),
\end{equation}
where $B_{\epsilon}(\bar{t},\bar{x})$ denotes the open ball of radius
$\varepsilon$
around $(\bar{t},\bar{x})$ taken relative to $\S$. We also write
$\overline{B_{\epsilon}(\bar{t},\bar{x})}$ for the closure of
$B_{\epsilon}(\bar{t},\bar{x})$. Since $V^{-}-\varphi\in LSC(\S)$, the set
$\overline{B_{\varepsilon}(\bar{t},\bar{x})}\setminus
B_{\varepsilon/2}(\bar{t},\bar{x})$ is compact, and the minimum of $V^{-}-\varphi$
is strict, we can find a constant $\kappa>0$ such that
\begin{equation}\label{eq:proof-SSub-kappa}
  V^{-}-\kappa\geq\varphi \quad\text{on }\quad
    \overline{B_{\varepsilon}(\bar{t},\bar{x})}\setminus
    B_{\varepsilon/2}(\bar{t},\bar{x}).
\end{equation}
With this, we define for $\eta\in(0,\kappa)$ the function
$\varphi^{\eta}:=\varphi+\eta$, for which it holds that
\[
  \varphi^{\eta}(\bar{t},\bar{x})
    =\varphi(\bar{t},\bar{x})+\eta
    =V^{-}(\bar{t},\bar{x})+\eta
    >V^{-}(\bar{t},\bar{x}).
\]
Moreover, we define another function $W^\eta:\S\to\R$ by
\begin{align*}
  W^{\eta}:=\begin{cases}
    V^{-}\vee \varphi^{\eta}
      &\text{on }\overline{B_{\varepsilon}(\bar{t},\bar{x})},\\
    V^{-} &\text{otherwise}.
\end{cases}
\end{align*}
Then, from \cref{eq:proof-SSub-kappa} and using $\kappa>\eta$, it follows that 
\[
  V^{-}>\varphi^{\eta}\qquad\text{on }
    \overline{B_{\varepsilon}(\bar{t},\bar{x})}\setminus
    B_{\varepsilon/2}(\bar{t},\bar{x}),
\]
so $W^{\eta}=V^{-}$ outside of $B_{\varepsilon/2}(\bar{t},\bar{x})$. In
particular, $W^{\eta}$ is lower semicontinuous. Next, note that
\begin{equation}\label{eq:proof-SSub-contr.}
  W^{\eta}(\bar{t},\bar{x}) = \varphi^{\eta}(\bar{t},\bar{x})
    > V^{-}(\bar{t},\bar{x})
\end{equation}
and, by choice of $\varepsilon$, we have $W^{\eta}(T,\argdot)=V^{-}(T,\argdot)$.
As $W^\eta = V^{-}$ outside a bounded set, $W^\eta$ satisfies the same growth
condition as $V^-$. Therefore, in order to show that $W^\eta$ is a stochastic
subsolution, we are left with verifying that it satisfies the submartingale
property. Once this is achieved, \cref{eq:proof-SSub-contr.} is a contradiction
to the maximality of $V^{-}$ and hence the viscosity supersolution property is
established.
Towards the verification of the submartingale property, let us fix $(t,x)\in\S$
and denote by $U^{t,x}\in\cA^{weak}(t,x)$ any weak admissible control.
We agree that whenever we subsequently speak of a submartingale, we always mean
with respect to $(\F^{t,x},\P^{t,x})$.
Next, we take as given two $\F^{t,x}$-stopping times
$t\leq\tau\leq\rho\leq T$ and we define the event
\[
  A:=\bigl\{(\tau,X_{\tau}^{t,x})\in B_{\varepsilon/2}(\bar{t},\bar{x})
  \text{ and }
  \varphi^\eta(\tau,X_{\tau}^{t,x})>V^{-}(\tau,X_{\tau}^{t,x})\bigr\}
\]
as well as the stopping time
\[
  \tau_1:=\inf\bigl\{s\in[\tau,T]:X_{s}^{t,x}\in\partial
    B_{\varepsilon/2}(\bar{t},\bar{x})\bigr\}.
\]
Here, $\partial B_{\varepsilon/2}(\bar{t},\bar{x})$ denotes the boundary of
$B_{\varepsilon/2}(\bar{t},\bar{x})$.
Note that it follows from \cref{eq:proof-SSub-contr-in-ball} that we have
\begin{equation}\label{eq:proof-SSub-contr-in-ball-eta}
  \rF\bigl(t,x,\partial_t \varphi^\eta(t,x),\rD_x\varphi^\eta(t,x),
    \rD_{xx}\varphi^{\eta}(t,x)\bigr)<0
    \quad\forall (t,x)\in B_{\varepsilon}(t,x)
\end{equation}
as the derivatives of $\varphi$ and $\varphi^{\eta}$ agree. With this, using
It\^{o}'s formula and \cref{eq:proof-SSub-contr-in-ball-eta}, we find that
\begin{align*}
  \mathds{1}_{A}W^{\eta}(\tau,X_{\tau}^{t,x})
    &=\mathds{1}_{A}\varphi^{\eta}(\tau,X_{\tau}^{t,x})\\
    &\leq \E^{t,x}\Bigl[\Bigl(\int_{\tau}^{\rho\wedge\tau_{1}}
      \tilde{k}(s,X_{s}^{t,x},u_{s})\rd s 
      +\varphi^{\eta}\bigl(\rho\wedge\tau_{1},
      X_{\rho\wedge\tau_{1}}^{t,x}\bigr)\Bigr)\mathds{1}_{A}
      \Big|\cF^{t,x}_{\tau}\Bigr]\\
	&\leq \E^{t,x}\Bigl[\Bigl(\int_{\tau}^{\rho\wedge\tau_{1}}
      \tilde{k}(s,X_{s}^{t,x},u_{s})\rd s
      + W^{\eta}\bigl(\rho\wedge\tau_{1},
      X_{\rho\wedge\tau_{1}}^{t,x}\bigr)\Bigr)\mathds{1}_{A} 
      \Big|\cF^{t,x}_{\tau}\Bigr],
\end{align*}
where the stochastic integral vanishes by the boundedness of the integrand.
Next, using that $V^-$ is a stochastic supersolution and hence satisfies
the submartingale property, we obtain
\begin{align*}
  \mathds{1}_{A^{c}}W^{\eta}(\tau,X_{\tau}^{t,x})
    &=\mathds{1}_{A^{c}}V^{-}(\tau,X_{\tau}^{t,x})\\
    &\leq \E^{t,x}\Bigl[\Bigl(\int_{\tau}^{\rho\wedge\tau_{1}}
      \tilde{k}(s,X_{s}^{t,x},u_{s})\rd s 
      +V^{-}\bigl(\rho\wedge\tau_{1},X_{\rho\wedge\tau_{1}}^{t,x}\bigr)
      \Bigr)\mathds{1}_{A^{c}}\Big|\cF^{t,x}_{\tau}\Bigr]\\
    &\leq \E^{t,x}\Bigl[\Bigl(\int_{\tau}^{\rho\wedge\tau_{1}}
      \tilde{k}(s,X_{s}^{t,x},u_{s})\rd s 
      +W^{\eta}\bigl(\rho\wedge\tau_{1},X_{\rho\wedge\tau_{1}}^{t,x}\bigr)
      \Bigr)\mathds{1}_{A^{c}}\Big|\cF^{t,x}_{\tau}\Bigr].
\end{align*}
Putting everything together yields
\[
  W^{\eta}(\tau,X_{\tau}^{t,x})
    \leq \E^{t,x}\Bigl[\int_{\tau}^{\rho\wedge\tau_{1}}
    \tilde{k}(s,X_{s}^{t,x},u_{s})\rd s 
    +W^{\eta}\bigl(\rho\wedge\tau_{1},X_{\rho\wedge\tau_{1}}^{t,x}\bigr)
    \Big|\cF^{t,x}_{\tau}\Bigr].
\]
We now define $B:=\{\rho>\tau_{1}\}\in\cF^{t,x}_{\tau_{1}\wedge\rho}$.
Using the submartingale property of $V^-$ again, we get
\begin{align*}
  \mathds{1}_{B}W^{\eta}(\tau_{1},X_{\tau_{1}}^{t,x})
    &=\mathds{1}_{B}V^{-}(\tau_{1},X_{\tau_{1}}^{t,x})\\
    &\leq \E^{t,x}\Bigl[\Bigl(\int_{\tau_{1}}^{\rho}
      \tilde{k}(s,X_{s}^{t,x},u_{s})\rd s
      +V^{-}\bigl(\rho,X_{\rho}^{t,x}\bigr)\Bigg)\mathds{1}_{B}
      \Big|\cF^{t,x}_{\tau_{1}}\Bigr]\\
    &\leq \E^{t,x}\Bigl[\Bigl(\int_{\tau_{1}}^{\rho}
      \tilde{k}(s,X_{s}^{t,x},u_{s})\rd s 
      +W^{\eta}\bigl(\rho,X_{\rho}^{t,x}\bigr)\Bigr)\mathds{1}_{B}
      \Big|\cF^{t,x}_{\tau_{1}}\Bigr].
\end{align*}
Using this inequality in our estimate above we arrive at
\[
  W^{\eta}(\tau,X_{\tau}^{t,x})
  \leq \E^{t,x}\Bigl[\int_{\tau}^{\rho}
    \tilde{k}(s,X_{s}^{t,x},u_{s})\rd s 
    +W^{\eta}\bigl(\rho,X_{\rho}^{t,x}\bigr)\Big|\cF^{t,x}_{\tau}\Bigr],
\]
which is the desired submartingale property.

\emph{Step 2}: The supersolution property at terminal time. As in the first
step, we argue by contradiction and assume that there exists $\bar{x}\in\S$
such that
\[
  V^{-}(T,\bar{x})-\tilde{g}(\bar{x})=:-\bar{\kappa}<0.
\]
As $g$ is continuous and $\mu$ compactly supported, we can find 
$\varepsilon\in(0,\bar{\kappa})$ such that
\begin{equation}\label{eq:proof-Stochsub-term-ineq-in-ball}
  V^{-}(T,x)-\tilde{g}(x)<-\varepsilon<0
  \quad\forall x\in B_{\varepsilon}(T,\bar{x}).
\end{equation}
Furthermore, since $V^{-}\in LSC(\S)$ and
$\overline{B_{\varepsilon}(T,\bar{x})}\setminus B_{\varepsilon/2}(T,\bar{x})$
is compact, we can find a constant $\beta>0$ such that
\[
  V^{-}(T,\bar{x})+\varepsilon
  <\frac{\varepsilon^{2}}{2\beta}+V^{-}(t,x)
  \quad \forall (t,x)\in \overline{B_{\varepsilon}(T,\bar{x})}\setminus
  B_{\varepsilon/2}(T,\bar{x}).
\]
We now show that there exists $M\geq \varepsilon/(2\beta)$ large enough such
that
\[
  \phi(t,x):=V^{-}(T,\bar{x})-\frac{1}{\beta}|x-\bar{x}|^{2}-M(T-t)
\]
satisfies
\[
  -\phi_{t}(t,x)-\inf_{u\in \cU}\cL^{u}\phi(t,x)<0
  \quad \forall (t,x)\in\overline{B_{\varepsilon}(T,\bar{x})},
\]
For this, we first note that
\[
  \phi_{t}(t,x) = M,\quad
  \rD_x\phi(t,x)=\frac{2}{\beta}(x-\bar{x}),
  \quad
  \rD_{xx}\phi(t,x)=\frac{2}{\beta}{\bf I}_{\tilde{m}},
  \quad (t,x)\in\overline{B_{\varepsilon}(T,\bar{x})}
\]
which together with boundedness of $f$ and $\Sigma$ implies the existence
of $C>0$ such that
\begin{align*}
  -\phi_{t}(t,x)-\inf_{u\in \cU}\cL^{u}\phi(t,x)
  \leq -M + C\varepsilon\frac{2}{\beta}+C\frac{1}{\beta},
  \qquad (t,x)\in\overline{B_{\varepsilon}(T,\bar{x})}
\end{align*}
and the right-hand side is negative for $M$ sufficiently large.
Next, we note that for any $(t,x)\in
\overline{B_{\varepsilon}(T,\bar{x})}\setminus B_{\varepsilon/2}(T,\bar{x})$
 we have
\[
  \phi(t,x) 
  \leq V^{+}(t,x)-\varepsilon - \frac{1}{\beta}|x-\bar{x}|^{2} - M(T-t)
  \leq V^{-}(t,x)-\varepsilon,
\]
where we used that $|x-\bar{x}|\geq \varepsilon/2$ and $T-t\geq \varepsilon/2$.
By \cref{eq:proof-Stochsub-term-ineq-in-ball}, it follows that
\[
  \phi(T,x)
  \leq V^{-}(T,\bar{x})
  \leq \tilde{g}(x)-\varepsilon,\quad (T,x)\in \overline{B_{\varepsilon}(T,\bar{x})}.
\]
Now set $\phi^{\eta}:=\phi+\eta$ and define $W^\eta:\S\to\R$ by
\[
  W^{\eta}:=\begin{cases}
    V^{-}\vee \phi^{\eta} &\text{on }\overline{B_{\varepsilon}(T,\bar{x})},\\
    V^{-} &\text{otherwise}.
	\end{cases}
\]
We may proceed exactly as in the first step of this proof to show that
$W^{\eta}\in\cV^{-}$ and obtain the same contradiction to the maximality of
$V^{-}$.
\end{proof}

\bibliographystyle{alpha}
\bibliography{References}

\end{document}